  \numberwithin{equation}{section}
  \newcommand{\N}{\mathbb{N}}         % natural numbers
  \newcommand{\R}{\mathbb{R}}         % real line
  \newcommand{\Z}{\mathbb{Z}}
  \newcommand{\EE}{\mathbb{E}}        % expectation
  \newcommand{\PP}{\mathbb{P}}        % probability
  \newtheorem{thm}{Theorem}[section]
  \newtheorem{lemma}[thm]{Lemma}
  \theoremstyle{remark}
  \newtheorem{rem}[thm]{Remark}
  \DeclareMathOperator{\spt}{spt}
\begin{document}

\title{New bounds on Cantor maximal operators}

\author{Pablo Shmerkin}
\email{pshmerkin@math.ubc.ca}
\address{Department of Mathematics, the University of British Columbia, 1984 Mathematics Road
Vancouver, BC, V6T 1Z2, Canada}

\author{Ville Suomala}
\email{ville.suomala@oulu.fi}
\address{Research Unit of Mathematical Sciences,
	P.O.Box 8000, FI-90014,  University of Oulu,
	Finland}

\thanks{PS was supported by project PICT 2015-3675 (ANPCyT) and by an NSERC discovery grant. VS was in part supported by the Academy of Finland.  We also acknowledge support from the Institut Mittag-Leffler via the ``Fractal Geometry and Dynamics'' research program, where this project started.}

\subjclass[2020]{Primary: 42B25; Secondary: 28A80, 60G57.}

\begin{abstract}
We prove $L^p$ bounds for the maximal operators associated to an Ahlfors-regular variant of fractal percolation. Our bounds improve upon those obtained by I. {\L}aba and M. Pramanik and in some cases are sharp up to the endpoint. A consequence of our main result is that there exist Ahlfors-regular Salem Cantor sets of any dimension $>1/2$ such that the associated maximal operator is bounded on $L^2(\R)$. We follow the overall scheme of {\L}aba-Pramanik for the analytic part of the argument, while the probabilistic part is instead inspired by our earlier work on intersection properties of random measures. \end{abstract}

\maketitle

\section{Introduction}

\subsection{Maximal functions associated to singular measures}

One of the most classical results in real analysis is the $L^p$ boundedness of the Hardy-Littlewood maximal operator, which can be restated as follows: let $\nu$ be Lebesgue measure on the unit ball of $\R^d$. Then the maximal operator
\[
\overline{M}_\nu f(x) = \sup_{r>0} \int |f(x+ry)|\,d\nu(y)
\]
is bounded on $L^p$ for $p>1$. It makes sense to study such operators $\overline{M}_\nu$ also when $\nu$ is replaced by other, singular measures; one would expect its boundedness properties to reflect in some sense the geometry of the measure $\nu$. The case in which $\nu$ is surface area on the $(d-1)$-dimensional sphere is the celebrated spherical maximal theorem of Stein \cite{Stein1976}, for the case $d\ge 3$, and Bourgain \cite{Bourgain1986}, in the more challenging case $d=2$: in this case $\overline{M}_\nu$ is bounded on $L^p$ if and only if $p>d/(d-1)$. A large body of related work exists in which $\nu$ is replaced by Hausdorff measure on more general manifolds under curvature assumptions, or is assumed to satisfy a power Fourier decay bound; see for example \cite{RubioDeFrancia1986}.

Neither of the classical approaches gives information if $\nu$ is a singular measure on the real line, since the concept of submanifold or curvature is not available and the required Fourier decay cannot possibly hold. Nevertheless, it is natural to study this problem when $\nu$ is, for example, Hausdorff measure on a Cantor set of dimension $<1$. A first breakthrough in this direction was achieved by {\L}aba and Pramanik in \cite{LabaPramanik2011}. In order to state their result, we introduce the \emph{restricted} (or single-scale) version of the maximal operator, defined as
\[
M_\nu f(x) = \sup_{r\in [1,2]}\int |f(x+ry)|\,d\nu(y).
\]
The restricted version is easier to handle technically and, in any case, its mapping properties can be used in some cases to derive bounds also for certain unrestricted operators - see \S\ref{subsec:mainresult} below.

\begin{thm}[{\cite[Theorem 1.3]{LabaPramanik2011}}] \label{thm:LabaPramanik}
For any $s\in (2/3,1)$ there exists a measure $\nu$ supported on a Cantor set of Hausdorff dimension $s$, such that the associated restricted maximal operator $M_\nu f(x)$ is bounded from $L^p(\R)$ to $L^q(\R)$ for $p>(2-s)/s$ and $q\in[p,ps/(2-2s)]$.
\end{thm}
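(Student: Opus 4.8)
The plan is to construct $\nu$ by a random procedure and then analyze $M_\nu$ using the structure of $\nu$.

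\emph{Construction.} I would take $\nu$ to be the natural measure on a random Cantor subset of $[1,2]$, built by an $m$-adic subdivision scheme for a large fixed integer $m$: at each level one keeps a random subcollection of the $m$ children of every surviving interval, tuned so that the expected number kept is $m^s$ and the retained positions are sufficiently decorrelated, in the spirit of the classical random constructions of Salem and Kahane. Three properties should hold almost surely, each via a large-deviation estimate exploiting the independence between levels: (i) a Frostman bound $\nu(B(x,r))\lesssim_\e r^{s-\e}$, hence $\dim_H(\spt\nu)=s$; (ii) a power Fourier decay $|\widehat\nu(\xi)|\lesssim_\e|\xi|^{-s/2+\e}$ --- one first bounds $\EE|\widehat\nu(\xi)|^2\lesssim|\xi|^{-s}$ using level-independence, then upgrades this to a pointwise-in-$\xi$ statement by a union bound over a unit-spaced net of each dyadic frequency block ($\widehat\nu$ being Lipschitz at unit scale, $\nu$ being compactly supported) together with Borel--Cantelli, recording along the way the same decay for $\widehat{x\,\nu}=\tfrac{i}{2\pi}\widehat\nu{}'$; and (iii) a correlation bound controlling the overlaps of translates of $\nu$ at every scale --- equivalently, Frostman-type bounds for the products $\nu^{\otimes2}$, $\nu^{\otimes3}$ near their diagonals, or for the densities of $\nu\ast\nu$ and $\nu\ast\nu\ast\nu$. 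Property (iii) is the key extra ingredient: although morally of the same Fourier-analytic flavour, obtaining it with the sharp exponents needed below, uniformly over all scales, is precisely what the random model delivers, and it is close in spirit to intersection estimates for random measures.

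\emph{Analytic part.} Decompose $\nu=\sum_{j\ge0}\nu_j$ into Littlewood--Paley pieces with $\widehat{\nu_j}$ supported in $\{|\xi|\sim2^j\}$, so that $M_\nu f\le\sum_j M^jf$ with $M^jf=\sup_{r\in[1,2]}|A^j_rf|$, where $A^j_r$ is the associated frequency-localised averaging operator. From (i)--(iii) one reads off single-scale $L^p\to L^q$ estimates, uniform in $r$: $\|\nu_j\|_\infty\lesssim_\e 2^{j(1-s+\e)}$ gives the $L^1\to L^\infty$ bound, $\|\widehat{\nu_j}\|_\infty\lesssim_\e 2^{-j(s/2-\e)}$ the $L^2\to L^2$ bound, $\|\nu_j\|_1\lesssim1$ the $L^p\to L^p$ bound, and a $TT^\ast$/bilinear computation on the frequency side, fed with the correlation bounds (iii), gives an improved $L^p$-improving estimate (morally a restriction/extension estimate for $\nu$ in the spirit of Mockenhaupt--Mitsis--Bak--Seeger, sharpened by the higher correlations). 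Interpolating yields $\|A^j_rf\|_q\lesssim_\e 2^{-j(\gamma(p,q,s)-\e)}\|f\|_p$ with $\gamma(p,q,s)\ge 1/q$ on the closed region $\{(p,q):p\ge(2-s)/s,\ p\le q\le ps/(2-2s)\}$, strictly so off the line $p=(2-s)/s$. To pass to the maximal function one uses the Sobolev embedding in the dilation parameter, $\sup_{r\in[1,2]}|g(r)|^q\lesssim\int_1^2|g|^q+\big(\int_1^2|g|^q\big)^{1/q'}\big(\int_1^2|g'|^q\big)^{1/q}$, noting that $\partial_r$ applied to the $2^j$-localised kernel costs exactly one factor of $2^j$ (this is where the decay of $\widehat\nu{}'$ from (ii) is used); hence $\|M^jf\|_q\lesssim_\e 2^{j(1/q-\gamma(p,q,s)+\e)}\|f\|_p$. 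Choosing $\e$ small and summing the geometric series over $j$ converges precisely when $1/q<\gamma(p,q,s)$, i.e.\ on all of the above region except the excluded edge $p=(2-s)/s$ --- which is exactly the asserted range.

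\emph{Main obstacle.} The heart of the matter is that the soft single-scale estimates coming from (i) and (ii) alone cannot survive the $2^{j/q}$ loss built into the passage to the maximal function (just as plain Sobolev embedding in $r$ is too lossy for Bourgain's circular maximal theorem in the plane): the Frostman and Fourier-decay properties by themselves do not suffice, and the correlation bound (iii) is indispensable. Closing the gap thus hinges on two things: pinning down exactly which correlation estimate lifts the single-scale $L^p$-improving bound up to the critical value $\gamma=1/q$ --- this is where the hypothesis $s>2/3$ enters, as the relevant triple self-correlation of $\nu$ must decay fast enough --- and verifying that a random Cantor set of dimension $s$ can be arranged to satisfy it almost surely with a sharp exponent. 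The remaining points (the precise Littlewood--Paley set-up, the derivative bound for $\widehat\nu$, the slowly-varying argument in (ii), and transferring from the single measure $\nu$ to a genuine Cantor \emph{set}) are more routine.
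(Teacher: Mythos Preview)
First, a framing point: Theorem~\ref{thm:LabaPramanik} is quoted from \cite{LabaPramanik2011} and is not proved in this paper; the relevant comparison is with the {\L}aba--Pramanik scheme as the paper summarizes it in Theorem~\ref{thm:LabaPramanikAbstract} and the surrounding discussion.

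Your plan follows the Rubio de Francia template for spherical maximal operators: Littlewood--Paley decompose $\nu$ in \emph{frequency}, prove single-scale $L^p\to L^q$ bounds for $A^j_r$, and pass to the supremum in $r$ via Sobolev embedding in the dilation parameter, paying a factor $2^{j/q}$. The {\L}aba--Pramanik argument is structurally different. The decomposition is \emph{spatial}: one writes $\nu$ as a telescoping sum of $\sigma_k=\nu_{k+1}-\nu_k$, where the $\nu_k$ are the successive construction-step densities, not frequency projections. The supremum over $r$ is not handled by Sobolev embedding but is \emph{linearized}: one replaces $\sup_r$ by a measurable selection $r(x)$ and studies the linear operator $M_{k,r}$. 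One then \emph{dualizes} to restricted-type bounds on the adjoint; for even $d$ the quantity $\|M^*_{k,r}\mathbf{1}[\Omega]\|_d^d$ is exactly the integral \eqref{eq:dual-integer-d}, and the whole problem reduces to the two geometric inequalities \eqref{eka}--\eqref{toka} at exponents $d$ and $d+2$, followed by H\"older interpolation. No Fourier decay and no derivative-in-$r$ estimate enter anywhere; the ``correlation'' information goes in directly as \eqref{eka}--\eqref{toka}. In particular, the threshold $s>2/3$ in \cite{LabaPramanik2011} arises from the interplay of the $d=2$ and $d+2=4$ estimates in this framework, not from a triple self-correlation as you guess.

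The substantive gap in your proposal is the sentence ``a $TT^\ast$/bilinear computation \dots\ gives an improved $L^p$-improving estimate \dots\ Interpolating yields $\gamma(p,q,s)\ge 1/q$ on the closed region \dots''. This is the entire content of the theorem, and you have not written down which correlation inequality you intend to prove, which $L^p\to L^q$ exponent it yields for $A^j_r$, or why interpolation with the three soft endpoints lands exactly on $\gamma=1/q$ along the boundary of the {\L}aba--Pramanik region. You correctly diagnose that the Frostman and Fourier-decay inputs alone cannot absorb the $2^{j/q}$ loss from Sobolev-in-$r$; but the Sobolev-in-$r$ mechanism is generically \emph{lossier} than linearization (this is precisely why Bourgain's circular maximal theorem does not go through a pure Sobolev-in-$r$ argument), and the Mockenhaupt--Mitsis--Bak--Seeger restriction theorem, even in its sharp form, does not by itself produce the range $p>(2-s)/s$. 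So while your list of probabilistic ingredients (random construction, almost-sure Frostman and Salem properties, higher-order self-correlation control) is the right shopping list, the analytic machine you propose to feed them into is not the one known to close, and the step that would have to close it is left as an unspecified assertion.
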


 The Cantor set and the measure $\nu$ arising in the proof of Theorem \ref{thm:LabaPramanik} are obtained through an ad-hoc random iterative process; no almost sure statements with respect to an underlying distribution on Cantor measures are made.

In subsequent work, {\L}aba \cite{Laba2018} considered maximal operators for certain self-similar Cantor sets in which the randomization only occurs in the first level pattern of the construction. While stopping short of proving $L^p$ estimates in this case, her results cover Cantor sets of arbitrarily small dimension, in addition to providing the first results in the area for self-similar examples.

In order to discuss the sharpness of Theorem \ref{thm:LabaPramanik}, let us note the following simple lemma.

\begin{lemma}\label{lem:sharpness}
	If $\nu$ is a finite measure giving positive mass to a set of Hausdorff dimension $<t$, then $M_\nu$ cannot be bounded from $L^{1/t}(\R)$ to $L^q(\R)$ for any $q\in [1,+\infty]$.
\end{lemma}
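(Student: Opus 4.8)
The plan is to refute any inequality of the form $\|M_\nu f\|_{L^q(\R)}\le C\|f\|_{L^{1/t}(\R)}$ by testing it against indicators of arbitrarily short intervals, using the supremum over dilations to manufacture a lower bound for $M_\nu f$ on a set of $x$'s whose measure does not shrink. The first step is to locate a good base point. Let $A$ be a set with $\nu(A)>0$ and $\dim_H A<t$, so that $\mathcal H^t(A)=0$. A standard consequence of the Vitali covering lemma is that for $\nu$-a.e.\ $x\in A$ one has $\limsup_{r\to0}\nu([x-r,x+r])\,r^{-t}=+\infty$ (if not, the set where this $\limsup$ were finite would have positive $\nu$-measure, yet the covering lemma bounds its $\nu$-measure by a multiple of $\mathcal H^t(A)=0$); alternatively, pigeonholing over an $\mathcal H^t$-efficient cover of $A$ produces, for each $\varepsilon>0$, an interval $I$ with $|I|\le\varepsilon$ and $\nu(I)\ge\varepsilon^{-1}\nu(A)|I|^t$, which serves the same purpose once $A$ has been localized away from the origin. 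Since $\nu$ is non-atomic we may fix such a point $x_0\ne 0$ and a sequence $r_n\downarrow 0$ with $\nu([x_0-r_n,x_0+r_n])\ge\lambda_n r_n^t$, where $\lambda_n\to\infty$.

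Next I would set $f_n=\mathbbm{1}_{[-2r_n,\,2r_n]}$, so that $\|f_n\|_{L^{1/t}(\R)}=(4r_n)^t$. Consider the interval $J=\{-rx_0:r\in[1,2]\}$, which has length $|x_0|$ independently of $n$. For $x\in J$, choose the dilation $r=-x/x_0\in[1,2]$; then the condition $x+ry\in[-2r_n,2r_n]$ becomes $y\in[x_0-2r_n/r,\,x_0+2r_n/r]$, and since $2r_n/r\ge r_n$,
\[
\int |f_n(x+ry)|\,d\nu(y)=\nu\!\left([x_0-\tfrac{2r_n}{r},\,x_0+\tfrac{2r_n}{r}]\right)\ge \nu([x_0-r_n,x_0+r_n])\ge\lambda_n r_n^t .
\]
Thus $M_\nu f_n\ge\lambda_n r_n^t$ everywhere on $J$. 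If $M_\nu$ were bounded from $L^{1/t}(\R)$ to $L^q(\R)$ with norm $C$, this would force $\lambda_n r_n^t\,|x_0|^{1/q}\le\|M_\nu f_n\|_{L^q(\R)}\le C(4r_n)^t$ for every $n$ (with $|x_0|^{1/q}$ read as $1$ when $q=\infty$), i.e.\ $\lambda_n\le 4^tC|x_0|^{-1/q}$, contradicting $\lambda_n\to\infty$.

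No step here is genuinely difficult; the one thing to get right is the coupling $r=-x/x_0$ between the translation and the dilation parameters, which is exactly what converts a density blow-up of $\nu$ at a single infinitesimal scale into a lower bound for $M_\nu f_n$ on a set of $x$'s of fixed length. The only external input is the density/covering estimate used to produce $x_0$, and the only assumption used beyond the stated hypothesis is that $\nu$ have no atom at the origin — automatic for the measures considered here, and genuinely needed, since if $\nu(\{0\})>0$ then $M_\nu f\ge\nu(\{0\})|f|$, which is compatible with $L^{1/t}$-boundedness.
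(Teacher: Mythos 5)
Your argument is correct and is essentially the paper's proof: both locate a point $x_0\neq 0$ where the $t$-dimensional density of $\nu$ blows up along a sequence of scales (you via the density/covering theorem, the paper via the mass distribution principle, which are the same tool), test against indicators of intervals of length $\sim r_n$, and couple the dilation parameter to $x$ (your $r=-x/x_0$ versus the paper's $r=1-x/x_0$, after centering the test function at $0$ rather than at $x_0$) to get a lower bound for $M_\nu f_n$ on an interval of fixed length. Your closing remark about the atom at the origin is a valid caveat that the paper also implicitly relies on (it asks for $x_0\neq 0$), and is harmless for the measures considered there.
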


\begin{proof}
 If $\nu(A)>0$ for a set $A$ of Hausdorff dimension $<t$, then by the mass distribution principle (see e.g. \cite[Proposition 4.9]{Falconer2014}) there are $s<t$ and a point $x_0\neq 0$ in the support of $\nu$ such that for a sequence $\delta_i\downarrow 0$, we have \[
 \nu([x_0-\delta_i,x_0+\delta_i])>\delta_i^s.
 \]
 Let $f_i$ be the indicator of the interval $[x_0-2\delta_i,x_0+2\delta_i]$. If $x\in [-x_0,0]$ then taking $r=1-x/x_0$ in the definition of $M_\nu$ we see that $M_\nu f_i(x) > \delta_i^s$, and hence $\|M_\nu f_i\|_{L^q(\R)}> x_0^{1/q} \delta_{i}^{s}$. On the other hand, $\|f_i\|_{L^{p}(\R)}\le (4\delta_i)^{1/p}$. Taking $\delta_i\downarrow 0$, we see that $M_\nu$ cannot be bounded from $L^p(\R)$ to $L^q(\R)$ if $p\le 1/t$.
\end{proof}

Note that for all dimension values $s\in (2/3,1)$, there is a gap between the admissible values $p>(2-s)/s$ provided by Theorem \ref{thm:LabaPramanik} and the barrier $p\ge 1/s$ arising from Lemma \ref{lem:sharpness}. It is natural to ask what is the optimal range of $p$, given $s$, see \cite[Remark 3 on p. 350]{LabaPramanik2011}.

In this article we obtain a version of the theorem of {\L}aba and Pramanik with improved exponents and dimension bounds, which in some cases close the gap indicated above. In particular, we show that there are measures supported on Ahlfors-regular Salem Cantor sets of any dimension $>1/2$ satisfying non-trivial maximal operator bounds (while we recall that in \cite{LabaPramanik2011} the Cantor sets must have dimension $>2/3$). Our constructions are still random at all scales, but fall into a widely studied class of random measures and sets closely related to the well known fractal percolation model. The method can be easily extended to other random models. Moreover, we are able to simplify various aspects of the rather involved original proof of Theorem \ref{thm:LabaPramanik}. Before stating our main result, Theorem \ref{thm:maximal}, we introduce the random model it involves.

\subsection{Ahlfors regular random sets and measures}\label{sec:nu_def}

Let  $\mathcal{D}_n$ denote the level $n$ dyadic intervals of $\R$:
\[\mathcal{D}_n=\{[k2^{-n},(k+1)2^{-n}]\,:\,k\in\Z\}\,.\]

Let $0<s<1$ and let $a_n\in\{1,2\}$ such that
\[
2^{sn-1}<\beta_n\le 2^{sn} \text{ for all } n\in\N, \quad\text{where } \beta_n=\prod_{i=1}^n a_n.
\]
Starting with the interval $A_0=[1,2]$, we inductively construct random sets $A_n$ as follows. If $a_n=2$, set $A_{n+1}=A_n$. Otherwise, if $a_n=1$, choose, for each $I\in\mathcal{D}_n$ such that $I\subset A_n$, one of the $2$ dyadic sub-intervals of $I$, with all choices being uniform and independent of each other and the previous steps. Let $A_{n+1}$ be the union of the chosen $I\in\mathcal{D}_n$. Then $\{ A_n\}$ is a decreasing sequence of nonempty sets (each $A_n$ consists of $\beta_n$ pairwise disjoint dyadic intervals of length $2^{-n}$), and we set
\[
A=\bigcap_{n=1}^\infty A_n\,.
\]
Let us further define
\begin{equation} \label{eq:def-nu-n}
\nu_n=\frac{2^n}{\beta_n}\mathbf{1}[A_n],
\end{equation}
and note that
\begin{equation}\label{eq:trivial_up}
\nu_n\le 2 \cdot 2^{n(1-s)}
\end{equation}
and that $\nu_n$ is zero off a set of Lebesgue measure at most $2^{n(1-s)}$.

It is easy to check (see e.g. \cite[Proposition 1.7]{Falconer2014}) that $\nu_n$ converges in the weak$^*$-sense to a Borel probability measure $\nu$ and that $\spt\nu=A$. Moreover, $\nu$ is Ahlfors $s$-regular, that is, there exists a (deterministic) constant $C>0$ such that
\[
C^{-1}\cdot r^s \le \nu(B(x,r)) \le C \cdot r^s
\]
for all $x\in A$ and all $r\in (0,1]$. In particular, the Hausdorff dimension of $A$, and of $\nu$, equal $s$ (deterministically) and
\begin{align}
\label{eq:ar}\nu(I), \nu_n(I)&\le 3|I|^s\text{ for all intervals }I\subset[0,1]\,,
\end{align}
where $|I|$ denotes the length of the interval. Note that in the above notation, and also in what follows, we will often identify the functions $\nu_n$ with the measure $d\nu_n(x)=\nu_n dx$. This also applies to Cartesian powers of $\nu_n$.

\subsection{Main result}
\label{subsec:mainresult}

We can now state our main result:

\begin{thm}\label{thm:maximal}
Let $s\in (1/2,1)$ and let $d=2\lceil \tfrac{1}{2-2s}-1\rceil$. Further, define
\begin{align*}
\theta_0 &= \frac{ds+1-d}{2}>0,\\
%\xi_0 &= d-(d+1)s\ge 0.
\xi_0 &=d+1-(d+2)s\ge 0
\end{align*}
If $\nu$ is the random measure defined above, then almost surely
$M_\nu$ is bounded  from $L^p(\R)$ to $L^q(\R)$ for all
\[
p > p_0:=
\frac{(2+d)\theta_0+d\xi_0}{(1+d)\theta_0+(d-1)\xi_0}
%
%\frac{d(d+1)(\theta_0+\xi_0)}{d^2\theta_0 +(d^2-1)\xi_0}
\]
and
\[
p\le q \le \frac{p}{p_0-1}.
\]
\end{thm}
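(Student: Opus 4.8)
The plan is to retain the analytic architecture of the proof of Theorem~\ref{thm:LabaPramanik} in \cite{LabaPramanik2011} while replacing its ad-hoc probabilistic input by the sharper correlation estimates available for the model of \S\ref{sec:nu_def}. Write $A_r f(x)=\int f(x+ry)\,d\nu(y)$, so that $M_\nu f=\sup_{r\in[1,2]}|A_r f|$, and perform a Littlewood--Paley decomposition in frequency, $M_\nu f\le |A_0 f|+\sum_{k\ge 1}M_\nu^{(k)}f$, where $M_\nu^{(k)}f=\sup_{r\in[1,2]}|A_r f*\psi_k|$ and $\psi_k$ is a projection to frequencies $|\xi|\sim 2^k$. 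The $k=0$ term is bounded on every $L^p$ simply because $\nu$ is a probability measure. For $k\ge1$, the supremum over $r\in[1,2]$ is absorbed at the cost of a factor $2^{\e k}$ via a Sobolev embedding in the $r$ variable (the $r$-derivative costing $\sim 2^k$ at frequency $2^k$), reducing matters to a bound on $\|A_r f*\psi_k\|_q$, uniform in $r\in[1,2]$, with constants that must remain summable in $k$ after the $2^{\e k}$ loss. Since $\nu$ is random and we only seek an almost sure conclusion, it suffices to establish such bounds in expectation and then invoke Borel--Cantelli; the passage from the finite-stage densities $\nu_n$ to $\nu$ is routine using weak-$*$ convergence together with Fatou's lemma.

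\textbf{Reduction to a correlation estimate.} Raising the fixed-scale quantity to the power $d$ (the even integer from the statement) and taking expectations, the estimate is governed by a weighted integral of the $d$-fold correlation measure $\EE[\nu^{\otimes d}]$, restricted to those configurations $(y_1,\dots,y_d)$ lying within $\sim 2^{-k}$ of the diagonal, against a kernel built from $f$ and $\psi_k$ whose relevant size is of order $2^{(d-1)k}$ (up to the $2^{\e k}$ factors). By the Ahlfors bound \eqref{eq:ar} and the tree structure of the construction, this correlation measure is in turn controlled by the finite-level quantities $\EE\big[\nu_k(Q_1)\cdots\nu_k(Q_d)\big]$ for dyadic $Q_1,\dots,Q_d\in\mathcal{D}_k$. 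Thus the whole argument rests on a sufficiently sharp upper bound for these moments, optimized over the choice of $d$.

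\textbf{The moment estimate (the crux).} Because the random choices in distinct dyadic subtrees are independent, $\EE[\nu_k(Q_1)\cdots\nu_k(Q_d)]$ factorizes over the ancestral forest of the $Q_j$: at levels where the cubes have already separated into distinct subtrees the contributions decouple and one pays only the ``Ahlfors price'', while at the levels where several cubes still share a common ancestor there is an enhancement, governed by the elementary fact --- and its refinements from our work on intersections of spatially independent martingales --- that $\nu_k$ restricted to a dyadic interval of length $2^{-\ell}$ has mass $\sim 2^{-s\ell}$ only on an event of probability $\sim 2^{-(1-s)\ell}$, together with the trivial bounds \eqref{eq:trivial_up}--\eqref{eq:def-nu-n}. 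Summing this enhancement over all coincidence patterns of the $d$ points inside a $2^{-k}$-ball produces the desired power bound; the quantities $\theta_0=\tfrac12(ds+1-d)$ and $\xi_0=d+1-(d+2)s$ in the statement are precisely the exponents that this book-keeping yields, and the hypothesis $s>1/2$ is exactly what forces $\theta_0>0$ and $\xi_0\ge0$ for the optimal $d=2\lceil\tfrac1{2-2s}-1\rceil$, so that the resulting series in $k$ converges. Feeding the bound back through the analytic reduction converts the summability requirement into an inequality among $p$, $q$, $s$ and $d$; optimizing in $d$ produces the threshold $p_0$ and the range $p\le q\le p/(p_0-1)$, where the upper endpoint is the one obtained by interpolating an estimate at $L^{p_0}\to L^{p_0/(p_0-1)}$ with the trivial $L^\infty\to L^\infty$ bound, and the smaller values $q\ge p$ then follow from the standard localization argument for operators of bounded range on $\R$.

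\textbf{Main obstacle.} The analytic skeleton, the interpolation and localization steps, and the $\nu_n\to\nu$ limiting argument are all routine, modulo technical care, given \cite{LabaPramanik2011}. The essential difficulty --- and the source of the improvement over Theorem~\ref{thm:LabaPramanik} --- is the sharp estimate of $\EE[\nu_k(Q_1)\cdots\nu_k(Q_d)]$ uniformly over all coincidence patterns of the $d$ cubes: one must track precisely how the enhancement coming from shared ancestors in the percolation tree accumulates along a configuration, show that it does not overwhelm the $2^{(d-1)k}$ blow-up of the kernel, and verify that for the optimal $d$ the two contributions balance to leave a genuine power gain in $k$. It is the sharpness of these exponents, rather than merely their summability, that pins down the value of $p_0$.
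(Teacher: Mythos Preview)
Your proposal departs from the paper's argument in both the analytic reduction and, more importantly, the probabilistic input, and the latter contains a genuine gap.

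On the analytic side, the paper does \emph{not} use a Littlewood--Paley decomposition in frequency nor a Sobolev embedding in $r$. Instead it writes $\nu=\nu_1+\sum_k\sigma_k$ with $\sigma_k=\nu_{k+1}-\nu_k$, linearizes the supremum over $r$ by passing to $M_{k,r}f(x)=\int f(z)\sigma_k\bigl(\tfrac{z-x}{r(x)}\bigr)\,dz$ for an arbitrary measurable $r\colon[-4,0]\to[1,2]$, and then, via duality and interpolation (Theorem~\ref{thm:LabaPramanikAbstract}), reduces to estimating
\[
\Phi_k^{(d)}=\sup_{r,\Omega}|\Omega|^{1-d}\int_{\Omega^d}\int\prod_{j=1}^d\sigma_k\Bigl(\tfrac{z-x_j}{r(x_j)}\Bigr)\,dz\,dx.
\]
Note also that your Sobolev step is mis-stated: in one $r$-variable the embedding $H^{1/2+\e}\hookrightarrow L^\infty$ together with $\|\partial_r A_r(f*\psi_k)\|\sim 2^k\|A_r(f*\psi_k)\|$ costs $2^{k/2}$, not $2^{\e k}$; this loss alone is fatal in dimension one, which is precisely why the paper (following \cite{LabaPramanik2011}) linearizes rather than uses Sobolev.

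The more fundamental gap is the probabilistic step. You propose to bound the \emph{first moments} $\EE\bigl[\nu_k(Q_1)\cdots\nu_k(Q_d)\bigr]$ and then appeal to Markov plus Borel--Cantelli. But a first-moment bound on the (positive) correlation measure $\EE[\nu^{\otimes d}]$ captures no cancellation: along a transversal line $L_{x,r}$ one has $\EE\bigl[\prod_j\nu_k(y_j)\bigr]\equiv 1$ by independence and $\EE[\nu_k]\equiv 1$, so the expected line integral does not decay in $k$ at all. The paper instead exploits the martingale identity $\EE[\sigma_k\mid A_{k-1}]=0$: conditioning on $A_{k-1}$, the line integral $\int_{L_{x,r}}\sigma_k^{\otimes d}$ is a sum of mean-zero random variables with a bounded-degree dependency graph, and the Hoeffding--Janson inequality (Lemma~\ref{lem:HoeffdingJanson}) yields square-root cancellation. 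That square root is exactly the factor $1/2$ in $\theta_0=(ds+1-d)/2$; without a concentration argument you cannot reach this exponent, and your assertion that the book-keeping on $\EE[\nu_k(Q_1)\cdots\nu_k(Q_d)]$ ``yields precisely $\theta_0$ and $\xi_0$'' is not correct. The paper further handles the supremum over \emph{all} lines (equivalently, over all measurable $r$) by combining the sub-exponential tail from Hoeffding--Janson with a deterministic Lipschitz bound in $(x,r)$ and a net of exponential size (Lemmas~\ref{lem:C_k_small_gen}--\ref{lem:no_intersections}); an expectation-plus-Markov argument gives only polynomial tails and cannot beat the union bound over this net.
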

See Figure \ref{fig:plot} for an illustration. We make some remarks on the statement of theorem and its proof.

\begin{figure}[H]
	\includegraphics[width=0.9\textwidth]{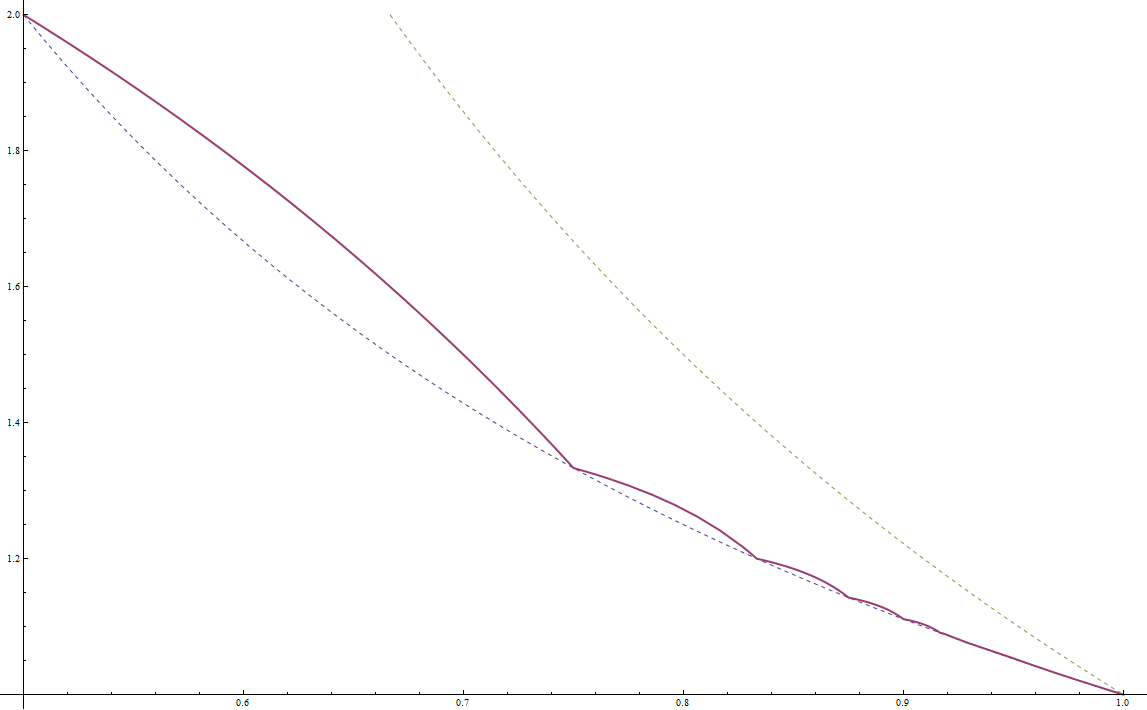}
	\caption{The solid curve shows the value of $p_0=p_0(s)$ provided by Theorem \ref{thm:maximal} for each $s\in (1/2,1)$, so that $M_\nu$ is a.s. bounded on $L^p$ for $p>p_0$. The lower dashed curve is $1/s$; by Lemma \ref{lem:sharpness}, the maximal operator can't be bounded on $L^p$ for $p<1/s$. The upper dashed curve is the value $p_0=(2-s)/s$, $s>2/3$, given by the theorem of {\L}aba-Pramanik (Theorem \ref{thm:LabaPramanik}) }
	\label{fig:plot}
\end{figure}

\begin{rem}
As remarked in Lemma \ref{lem:sharpness}, $M_\nu$ is not bounded from $L^p$ to any $L^q$ if $p<1/s$. In the special case $s=1-\frac{1}{2d'}$, $d'\ge 2$, the above theorem shows that $M_\nu$ is bounded on $L^p$ for any $p>1/s$. So Theorem \ref{thm:maximal} provides the first instance of a sharp bound for maximal operators associated to Cantor sets of fractional dimension (up to the endpoint). The lower bound $p_0$ provided by Theorem \ref{thm:maximal} is continuous in $s$, so the theorem also provides nearly sharp bounds for $s$ close to $1-\frac{1}{2d'}$ for $d'\ge 2$, and also for $s$ slightly larger than $1/2$ (but it provides no information for $s\le 1/2$).
\end{rem}

\begin{rem}
Unlike the construction of \cite{LabaPramanik2011}, the measure $\nu$ is Ahlfors-regular. This may be seen as a desirable geometrical property that holds in all classical examples such as smooth manifolds. Additionally, as a direct consequence of \cite[Theorem 14.1]{ShmerkinSuomala2018}, the measure $\nu$ is almost surely a Salem measure - again as is the case for manifolds for non-zero Gaussian curvature.
\end{rem}

\begin{rem}
While we have chosen the model described in Section \ref{sec:nu_def} for concreteness, the argument of the proof extends to a wide class of \emph{subdivision fractals} - see \cite[\S 5.2]{ShmerkinSuomala2018}.
\end{rem}

\begin{rem}
For proving Theorem \ref{thm:maximal}, we follow \cite{LabaPramanik2011} (with some minor simplifications) to reduce the claim to a purely geometric fact regarding intersections of random sets with lines, see Theorem \ref{thm:main} below. Our proof differs from that of \cite{LabaPramanik2011} in the probabilistic argument to establish the intersection result; it is here that our argument is more effective, and perhaps also simpler. It is inspired by our earlier work \cite{ShmerkinSuomala2018, ShmerkinSuomala2020}. A new aspect is that we also need to consider intersections with lines which are not ``transversal'' and this requires a more delicate analysis.
\end{rem}

We mention some direct  applications of Theorem \ref{thm:maximal}.  For the first two we follow \cite{LabaPramanik2011}. We fix $s\in (1/2,1)$ and the threshold $p_0$ provided by the theorem.

\begin{enumerate}
  \item Theorem \ref{thm:maximal} implies the boundedness of the \emph{unrestricted} maximal operator
\[
\overline{M}_\nu f(x) = \sup_{r>0} \int f(x+r y)\,d\nu(y)
\]
for $p=q>p_0$, by the argument in \cite[Section 7]{LabaPramanik2011} which is very general and does not rely on the specific random construction in that paper. For $q>p$ a similar result holds but one needs to weigh the unrestricted operator for scaling reasons, see \cite[Eq. (1.4)]{LabaPramanik2011}.

  \item Maximal operator bounds imply differentiation results. In particular, in the context of Theorem \ref{thm:maximal}, if $f\in L^p(\R)$ with $p>p_0$, then
\[
f(x) = \lim_{r\to 0} \int f(x+ry)\,d\nu(y)
\]
for a.e. $x\in\R$. See \cite[\S 8.1]{LabaPramanik2011}.
  \item Finally, maximal operator bounds also yield information about the Lebesgue measure of the union of families of Cantor sets. Namely, if the Borel set $E\subset\R$ contains a set of the form $x+ r A$ (for some $r$ depending on $x$) for each $x$ in a set $F$ of positive Lebesgue measure, then $E$ has positive Lebesgue measure. Indeed, $M_\nu \mathbf{1}_E(x) =1$ for all $x\in F$, which immediately yields $|E|>0$ (for this it is enough to have a bound $\| M_\nu f\|_q\le C\|f\|_p$ for any $p<\infty$). Similar problems for other classes of Cantor sets have been considered in \cite{Hochman2017, Laba2018}; the Cantor sets in those papers are (essentially) deterministic and the conclusions are therefore much weaker.
\end{enumerate}

\section{Reduction of the main theorem to intersection estimates}

\subsection{From intersection estimates to maximal operator bounds}

We will derive our main theorem by analyzing the intersections of the self-products of $\nu$ with lines. To reduce the main theorem to such intersection estimates,  we will use the following theorem that can be inferred from the framework of \cite{LabaPramanik2011}.

\begin{thm}\label{thm:LabaPramanikAbstract}
For $n\in\N$, let $A_n\subset [1,2]$ be a finite union of  closed intervals such that $A_{n+1}\subset A_n$. Denote
$\nu_n=|A_n|^{-1}\mathbf{1}[A_n]$ and $\sigma_n=\nu_{n+1}-\nu_n$. Suppose that $\nu_n$ converges in the weak*-sense to a measure $\nu$. Furthermore assume that $d\ge 2$ is an even integer and there are constants $\theta,\xi,K>0$, such that for all measurable choices of $r\colon[-4,0]\to[1,2]$ and for all $\Omega\subset [0,1]$ it holds that
\begin{align}
|\Omega|^{1-d}\int_{\Omega^d}\int\prod_{j=1}^d\sigma_k\left(\frac{z-x_j}{r(x_j)}\right)\,dz\,dx &\le K\exp(-\theta k)\,,\label{eka}\\
|\Omega|^{-1-d}\int_{\Omega^{d+2}}\int\prod_{j=1}^{d+2}\sigma_k\left(\frac{z-x_j}{r(x_j)}\right)\,dz\,dx &\le K\exp(\xi k)\,.\label{toka}
\end{align}
Then $M_\nu$ is bounded from $L^p(\R)$ to $L^q(\R)$, whenever
\[
p> p_0 := %(\frac{d(d+2)((d+2)\theta+d\xi)}{d(d+1)((d+2)\theta+d\xi)-2d\xi})
\frac{(2+d)\theta+d\xi}{(1+d)\theta+(d-1)\xi}
%
%\frac{d(d+1)(\theta+\xi)}{d^2 \theta + (d^2-1)\xi},
\]
and
\[
p \le q \le \frac{p}{p_0-1}.
\]
\end{thm}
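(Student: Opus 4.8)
The plan is to follow the scheme of {\L}aba--Pramanik: reduce everything to restricted-type bounds for single-scale pieces, extract from the two hypotheses a pair of such bounds with opposite exponential behaviour in $k$, interpolate, and sum a geometric series. First I would record two reductions. Since $M_\nu$ commutes with translations and $\spt\sigma_k\subset\spt\nu_n\subset[1,2]$, the value $M_\nu f(x)$ depends only on $f|_{[x+1,x+4]}$; consequently it suffices to prove $\|M_\nu\mathbf1_E\|_{L^q(\R)}\le C|E|^{1/p}$ for $E\subset[0,1]$, since the general restricted-type inequality follows by splitting $\R$ into unit intervals (using $q\ge p$ and the bounded overlap of the supports of the pieces), and strong type $(p,q)$ follows from restricted type by Marcinkiewicz interpolation in the open range. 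For $E\subset[0,1]$ the function $M_\nu\mathbf1_E$ is supported in $[-4,0]$. Using the weak$^*$ convergence one may write $\nu=\nu_0+\sum_{k\ge0}\sigma_k$, whence pointwise $M_\nu\mathbf1_E\le M_{\nu_0}\mathbf1_E+\sum_{k\ge0}\sup_{r\in[1,2]}\big|\int\mathbf1_E(x+ry)\sigma_k(y)\,dy\big|$. The term $M_{\nu_0}\mathbf1_E$ is at most $|A_0|^{-1}|E|$ and supported in $[-4,0]$, so its $L^q$ norm is $\lesssim|E|\lesssim|E|^{1/p}$ when $|E|\le1$, and the whole problem reduces to estimating the $L^q([-4,0])$ norm of the sum over $k$.

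For each $k$ I would linearize the supremum: pick a measurable $r_k\colon[-4,0]\to[1,2]$ attaining it, giving a linear operator $T_kg(x)=r_k(x)^{-1}\int g(z)\sigma_k\big(\tfrac{z-x}{r_k(x)}\big)\,dz$ with $|T_k\mathbf1_E|$ equal to the $k$-th summand. By Fubini, hypothesis \eqref{eka} with $\Omega=E$ and $r=r_k$ is exactly the assertion $\|S_k\mathbf1_E\|_{L^d(\R)}\le(Ke^{-\theta k})^{1/d}|E|^{1/d'}$, where $S_kg(z)=\int g(x)\sigma_k\big(\tfrac{z-x}{r_k(x)}\big)\,dx$. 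Since $S_k$ equals the adjoint $T_k^{*}$ composed with multiplication by the function $r_k\in[1,2]$, and dualizing preserves the (self-dual) exponent pair $(d',d)$, this yields a restricted-type $(d',d)$ bound for $T_k$ with constant $\lesssim K^{1/d}e^{-\theta k/d}$, uniformly in the choice of $r_k$; in the same way \eqref{toka} gives $T_k$ a restricted-type $((d+2)',d+2)$ bound with constant $\lesssim K^{1/(d+2)}e^{\xi k/(d+2)}$, and trivially $\|T_k\mathbf1_E\|_{L^\infty}\le\|\sigma_k\|_{L^1}\le2$.

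Real interpolation between the two restricted-type estimates then gives, for $t\in[0,1]$, restricted type $(p_t,q_t)$ for $T_k$ with $1/p_t=\tfrac{1-t}{d'}+\tfrac{t}{(d+2)'}$ and $1/q_t=\tfrac{1-t}{d}+\tfrac{t}{d+2}$ (so that $1/p_t+1/q_t=1$), and constant $\lesssim K^{O(1)}\exp\!\big(\big(-\tfrac{\theta(1-t)}{d}+\tfrac{\xi t}{d+2}\big)k\big)$; this constant is geometrically summable in $k$ precisely for $t<t^{*}:=\theta(d+2)/(\xi d+\theta(d+2))$, and a direct computation gives $p_{t^{*}}=p_0$ and $q_{t^{*}}=p_0/(p_0-1)$. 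Hence $\sum_k\|T_k\mathbf1_E\|_{L^{p'}([-4,0])}\lesssim|E|^{1/p}$ for every $p\in(p_0,d']$, so $M_\nu$ is bounded $L^p\to L^{p'}$ for such $p$. To obtain the whole range $p\le q\le p/(p_0-1)$ with $p>p_0$, I would interpolate these ``antidiagonal'' bounds with the trivial bound $M_\nu\colon L^\infty\to L^\infty$ (norm $\le\nu(\R)=1$): this produces $L^p\to L^{p/(p_1-1)}$ for every $p_1\in(p_0,d']$ and every $p\ge p_1$, covering $q\in[p,p/(p_0-1))$ as $p_1$ varies, and one more application of locality lets one trade $L^q$ for $L^{q'}$ on the output side for any $p\le q'\le q$, which (together with interpolating against $L^\infty\to L^\infty$ when $p>d'$) supplies the diagonal $q=p$; the endpoint $q=p/(p_0-1)$ is then recovered by a limiting argument.

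\emph{The main obstacle} is the bookkeeping in the last paragraph: verifying that the critical interpolation parameter $t^{*}$ produces exactly the threshold $p_0$ and the exponent interval $[p,p/(p_0-1)]$, and noticing that for $d>2$ one genuinely needs \emph{both} hypotheses and the locality reduction, since neither \eqref{eka} alone nor interpolation with trivial bounds reaches the diagonal. Secondary technical points are the measurable selection of $r_k$ and the passage between $S_k$, $T_k^{*}$ and $T_k$ through the bounded weight $r_k$.
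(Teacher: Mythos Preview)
Your scheme is essentially the paper's: linearize to $M_{k,r}$, read the two hypotheses as restricted $L^d$ and $L^{d+2}$ bounds for the adjoint via \eqref{eq:dual-integer-d}, interpolate to get exponential decay at an intermediate exponent, and sum. The paper interpolates first on the adjoint side (H\"older between $\|M_{k,r}^*\mathbf 1_\Omega\|_d$ and $\|M_{k,r}^*\mathbf 1_\Omega\|_{d+2}$) and then invokes a single duality step --- \cite[Lemma~3.4]{LabaPramanik2011}, explicitly flagged as requiring ``special care'' --- to pass directly to strong type for $M_{k,r}$ on the whole range $p>q_u'$, $q\in[p,(q_u-1)p]$; you instead dualize first at the two endpoints and interpolate afterwards, then recover the $q$-range by a further interpolation with $L^\infty\to L^\infty$ and locality. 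This is a harmless reordering, but one correction to your ``secondary technical point'' is needed: a restricted $(d',d)$ bound for $S_k\approx T_k^{*}$ does \emph{not} self-dualize to a restricted strong $(d',d)$ bound for $T_k$ --- what you actually obtain is the bilinear estimate $|\langle T_k\mathbf 1_E,\mathbf 1_F\rangle|\lesssim|E|^{1/d'}|F|^{1/d'}$, i.e.\ restricted \emph{weak} type (equivalently, genuine weak type $T_k\colon L^{d'}\to L^{d,\infty}$), and this is precisely the subtlety behind the paper's ``special care'' remark. Your argument still goes through, since Marcinkiewicz interpolation between the two weak-type endpoints $(d',d)$ and $((d+2)',d+2)$ yields strong type on the open segment, so your computation of $t^*$ and $p_{t^*}=p_0$ remains valid.
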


Since this is not stated in this form in \cite{LabaPramanik2011}, in the rest of this section we discuss the main steps of the proof, referring to \cite{LabaPramanik2011} for most details. We point out two differences between our approach and that of {\L}aba and Pramanik that are not essential but help us simplify parts of the argument. The first is that we don't have an explicit split between ``internal'' and ``transverse'' intersections. These concepts from \cite{LabaPramanik2011} were inspired by Bourgain's proof of the boundedness of the circular maximal operator, but such a dichotomy is not needed in our approach. The second is that they discretize the family of measurable functions $r$ at each scale at the ``deterministic'' stage, while we perform a similar discretization in the probabilistic part of the argument.

Roughly speaking, Theorem \ref{thm:LabaPramanikAbstract} follows a classical scheme involving discretization, linearization, dualization and interpolation arguments, although extra care is required at some steps. Since $\nu$ is fixed, we denote $M=M_\nu$ for simplicity. Firstly, using that $q\ge p$ it is enough to show that $M$ is bounded from $L^p[0,1]$ to $L^q(\R)$; this is due to the fact that we are dealing with the restricted operator ($r\in [1,2]$); see the proof of \cite[Lemma 3.1]{LabaPramanik2011} for details. Next, for $f\in C[0,1]$ define
\[
M_kf(x) = \sup_{r\in [1,2]} \left|  f(x+ry) \sigma_k(y) \, dy \right|.
\]
It is an easy consequence of the weak$^*$-convergence of $\nu_n$ to $\nu$ that
\[
M f \le N f + \sum_{k=1}^\infty M_k|f|, \quad \text{where } Nf(x)=\sup_{r\in [1,2]} \int |f(x+ry)|d\nu_1(y).
\]
As $\nu_1$ is dominated by a bounded multiple of Lebesgue measure, this reduces the problem to the study of the operators $M_k$.

Next, we linearize the problem. It is easy to see that for $f\in C[0,1]$,
\[
\|M_k f(x)\|_{L^q(\R)} \le 4 \sup_{r\colon[-4,0]\to[1,2] \text { measurable}} \|M_{r,k} f(x)\|_{L^q(\R)},
\]
where $M_{k,r}$ is the (linear) operator
\[
M_{k,r}f(x)= \int f(z) \sigma_k\left(\frac{z-x}{r(x)}\right)\,d z.
\]
See \cite[Proposition 3.2]{LabaPramanik2011} for details. (It is enough to consider $x\in [-4,0]$ since $r(x)\in [1,2]$ and $\sigma_k$ is supported on $[1,2]$.) Thus the claim will follow if
\begin{equation} \label{eq:M-k-r-bound}
\|M_{k,r}\|_{L^p[0,1]\to L^q[-4,0]}  \le \delta_k
\end{equation}
for some summable sequence $(\delta_k)$ independent of the choice of $r$. The adjoint operator to $M_{k,r}$ is
\[
M^*_{k,r}g(z) = \int g(x) \sigma_k\left(\frac{z-x}{r(x)}\right)\,d x.
\]
Using interpolation and duality, one can see that if $q_0\ge 2$ and the restricted bound
\begin{equation} \label{eq:restricted-exponential-decay}
\|M^*_{k,r}\mathbf{1}[\Omega]\|_{L^{q_0}[-4,0]} \le C 2^{-\zeta k} |\Omega|^{\tfrac{q_0-1}{q_0}}
\end{equation}
holds for all $\Omega \subset [0,1]$ and some constants $C,\zeta>0$, then \eqref{eq:M-k-r-bound} holds for
\[
p>\frac{q_0}{q_0-1}, \quad q=(q_0-1)p\,,
\]
for an exponentially decaying sequence $(\delta_k)$ (depending on $p,q$). Then it also holds for $q\in [p, (q_0-1)p]$. See \cite[Lemma 3.4]{LabaPramanik2011} for the details of this step, that requires special care. (This is where we use the hypothesis on $q$.)

On the other hand, it is easy to check (see \cite[Proof of Prop. 4.2]{LabaPramanik2011}) that if $d\ge 2$ is an even integer, then
\begin{equation} \label{eq:dual-integer-d}
\|M^*_{k,r}\mathbf{1}[\Omega]\|_{L^d[-4,0]}^d =\int_{\Omega^d}\int\prod_{j=1}^d\sigma_k\left(\frac{z-x_j}{r(x_j)}\right)\,dz\, dx.
\end{equation}
Let $u\in (0,1]$ satisfy $u>\tfrac{d\xi}{(d+2)\theta+d\xi}$ (where $\theta,\xi$  are as in Theorem \ref{thm:LabaPramanikAbstract}). Let $q_u\in (d,d+2]$ satisfy
\[
\frac{1}{q_u}=\frac{u}{d}+\frac{1-u}{d+2}
\]
so that $q_u=\tfrac{d(d+2)}{2u+d}$ (and $\tfrac{q_u}{q_u-1}=\tfrac{d(d+2)}{d(d+1)-2u}$).
Using H\"older's inequality,
\begin{align*}
\|M^*_{k,r}\mathbf{1}[\Omega]\|_{q_u}\le\|M^*_{k,r}\mathbf{1}[\Omega]\|_{d}^u\,\|M^*_{k,r}\mathbf{1}[\Omega]\|_{d+2}^{1-u}.
\end{align*}
Recalling the assumptions \eqref{eka}--\eqref{toka}, we arrive at
\begin{align*}
\|M^*_{k,r}\mathbf{1}[\Omega]\|_{q_u}&\le K'|\Omega|^{\tfrac{u(d-1)}{d}+\tfrac{(1-u)(d+1)}{(d+2)}}\exp\left(k\left(\frac{-u\theta}{d}+\frac{(1-u)\xi}{d+2}\right)\right)\\
&=K'|\Omega|^{\tfrac{q_u-1}{q_u}}\exp(-\zeta k)\,,
\end{align*}
for some $\zeta>0$ provided $u>\tfrac{d\xi}{(d+2)\theta+d\xi}$.

%Applying \eqref{eq:dual-integer-d} to $d$ and $d+1$ in conjunction with the hypotheses of Theorem \ref{thm:LabaPramanikAbstract}, and then interpolating, yields \eqref{eq:restricted-exponential-decay} with $q_u$ in place of $q_0$ and some $C,\zeta>0$ (depending on $d, \theta,\xi, K,u$). More precisely, $\zeta>0$ by the assumption $u>\tfrac{\xi}{\theta+\xi}$.
After some algebra, letting $u\downarrow \tfrac{d\xi}{(d+2)\theta+\xi}$  completes the proof of Theorem  \ref{thm:LabaPramanikAbstract}.

\subsection{Reduction of the main result to probabilistic intersection estimates}

Let us continue to denote
\begin{equation}\label{eq:sigma_def}
\sigma_n=\nu_{n+1}-\nu_n,
\end{equation}
where $\nu_n$ is as in \eqref{eq:def-nu-n}.

Using Theorem \ref{thm:LabaPramanikAbstract}, the following probabilistic estimate will easily imply our main result, Theorem \ref{thm:maximal}.
\begin{thm}\label{thm:main}
Let $\sigma_k$ be as in \eqref{eq:sigma_def} and fix $d\ge 2$. Let
\begin{equation} \label{eq:main}
\Phi_k^{(d)} = \sup_{r\colon[-4,0]\to[1,2], \Omega\subset[0,1]}|\Omega|^{1-d}\int_{\Omega^d}\int\prod_{j=1}^d \sigma_k\left(\frac{z-x_j}{r(x_j)}\right)\,dz\,dx,
\end{equation}
where the supremum runs over all measurable functions $r$. Note that $\Phi_k^{(d)}$ is a random variable.

Then for all $k\in\N$:
\begin{enumerate}
  \item \label{it:i} If $s>1-\tfrac1d$ and $0<\theta<(ds+1-d)/2$, then $\Phi_k^{(d)}\le K\exp(-\theta k)$,
  \item \label{it:ii} If $s\le 1-\tfrac1d$ and $\xi>d-1-ds \ge 0$ , then $\Phi_k^{(d)}\le K\exp(\xi k)$,
\end{enumerate}
where $K$ is an almost surely finite random variable (depending on $d,s,\theta,\xi$).
\end{thm}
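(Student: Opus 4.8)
The plan is to estimate the random quantity $\Phi_k^{(d)}$ by controlling, uniformly in the choice of $r$ and $\Omega$, the $d$-fold integral
\[
I_k(r,\Omega)=\int_{\Omega^d}\int \prod_{j=1}^d\sigma_k\Bigl(\tfrac{z-x_j}{r(x_j)}\Bigr)\,dz\,dx,
\]
which is a difference of Lebesgue integrals of products of the $\nu_{k},\nu_{k+1}$ densities along a line-like set. First I would open up $\sigma_k=\nu_{k+1}-\nu_k$ and, rather than expanding the product of $d$ differences into $2^d$ terms and hoping for cancellation term-by-term, I would exploit the martingale structure: the sequence $\{\nu_n\}$ is a measure-valued martingale (each $\nu_{k+1}$ is obtained from $\nu_k$ by independent uniform refinement on the surviving level-$k$ dyadic intervals when $a_k=1$, and $\nu_{k+1}=\nu_k$ when $a_k=2$), and $\sigma_k$ is the corresponding martingale increment. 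The heart of the matter is a moment bound on such increments integrated against a fixed ``test curve'' $(z,x_1,\dots,x_d)\mapsto \prod_j \mathbf 1[\,z-r(x_j)y\in\cdot\,]$; this is exactly the type of estimate developed in \cite{ShmerkinSuomala2018, ShmerkinSuomala2020}, and I would quote or adapt the relevant proposition there. The key geometric input is the Ahlfors regularity \eqref{eq:ar}: for a fixed line $\ell$ of slope in $[1,2]$ (coming from fixing $z$ and letting the $x_j$ vary, or vice versa), the measure $\nu_k^{\otimes d}$ of a $2^{-k}$-neighborhood of $\ell$ is controlled, and the fluctuation of $\sigma_k$ over such a tube is what produces the exponential gain.

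The structure of the two cases is dictated by a competition between the number of ``surviving'' configurations and the size of each increment. Reorganizing $I_k(r,\Omega)$ by first integrating in $z$, one sees it as $\int_{\Omega^d} G_k(x)\,dx$ where $G_k(x)=\int \prod_j \sigma_k((z-x_j)/r(x_j))\,dz$ measures how far from ``independent'' the $d$ points $x_1,\dots,x_d$ look at scale $2^{-k}$ along the relevant direction. A single increment $\sigma_k$ has $L^1$ norm $\lesssim 1$ but takes values of size $\sim 2^{k(1-s)}$ (cf.\ \eqref{eq:trivial_up}), so a crude bound on $G_k$ is $2^{k(d-1)(1-s)}$ times a measure term; the factor $|\Omega|^{1-d}$ out front is designed precisely to cancel the trivial $|\Omega|^{d-1}$ growth, so what remains is the balance $2^{k(d-1)(1-s)}$ against the probabilistic decay coming from the martingale cancellation. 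Carrying this out, the threshold is exactly $s=1-1/d$: when $s>1-1/d$ the cancellation wins and one gets genuine exponential decay $\exp(-\theta k)$ for any $\theta<(ds+1-d)/2$ — the factor $1/2$ reflecting that one pays ``half'' the trivial exponent to the concentration estimate (a second-moment/Borel–Cantelli type argument across the random scales, which is why $K$ is an a.s.\ finite random variable rather than a deterministic constant). When $s\le 1-1/d$ the trivial growth dominates and one only salvages $\exp(\xi k)$ with $\xi$ slightly above $d-1-ds\ge 0$; here the content is that the true growth rate is no worse than the naive one, which again follows from the same moment machinery but without needing to extract a gain.

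Concretely, the steps I would carry out are: (1) fix $r$ and $\Omega$, discretize $r$ to finitely many values (at the $k$-th scale, $O(2^{Ck})$ choices of a piecewise-constant approximant suffice, absorbed into the exponential), reducing to a union bound over a controlled family; (2) for each discretized $r$, write $I_k$ as an integral of a product of martingale increments and apply the key moment estimate from \cite{ShmerkinSuomala2018} (or its refinement in \cite{ShmerkinSuomala2020}) — this is where one needs the version allowing the ``lines'' to be non-transversal, i.e.\ nearly parallel to dyadic directions, which requires the more delicate analysis alluded to in the introduction; (3) combine the moment bound with Markov's inequality and Borel–Cantelli over $k$ and over the discretized $r$'s to get an a.s.\ bound $\Phi_k^{(d)}\le K\exp(\pm\, k\cdot(\text{rate}))$ with $K<\infty$ a.s.; (4) verify that the rate is $(ds+1-d)/2$ in case (i) and $d-1-ds$ in case (ii) by bookkeeping the Ahlfors-regularity exponents. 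I expect step (2), specifically the treatment of non-transversal intersections, to be the main obstacle: when the curve $z-r(x_j)y$ is close to a dyadic slope the increments $\sigma_k$ can fail to decorrelate in the clean way they do in the transversal regime, so one needs either a finer decomposition by ``degree of transversality'' or a separate estimate handling the parallel case, and making the bounds uniform over all measurable $r$ (hence over the worst-case slope) is the delicate point. The rest — discretization, Markov, Borel–Cantelli, and the final arithmetic matching the stated thresholds — is routine once the moment estimate is in place.
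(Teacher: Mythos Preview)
Your outline contains the right high-level ingredients (martingale increments, concentration, discretization, Borel--Cantelli), but two of the steps as written would fail, and both stem from a misidentification of what ``non-transversal'' means here.

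\textbf{The discretization step is fatal as stated.} You propose to discretize the \emph{function} $r\colon[-4,0]\to[1,2]$ by piecewise-constant approximants at scale $2^{-k}$ and claim there are $O(2^{Ck})$ of these. There are not: a piecewise-constant function on $\sim 2^k$ intervals, each taking one of $\sim 2^k$ values, ranges over $\sim 2^{k\cdot 2^k}$ possibilities, which swamps any sub-exponential tail bound. The paper avoids this entirely by observing that for a fixed $x=(x_1,\dots,x_d)$ the inner integral depends on $r$ only through the $d$ numbers $(r(x_1),\dots,r(x_d))$. One therefore proves the pointwise bound
\[
X_n(x,\rho)=\int_{L_{x,\rho}}\lambda_n\,d\mathcal{H}^1
\]
uniformly over the \emph{finite-dimensional} parameter $(x,\rho)\in[-4,0]^d\times[1,2]^d$ (Lemmas~3.1 and~3.2), with $x$ and $\rho$ completely decoupled. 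A $2^{-Ck}$-net in this $2d$-dimensional box genuinely has $O(2^{Ck})$ points, and a Lipschitz estimate for $(x,\rho)\mapsto X_n(x,\rho)$ transfers the bound from the net to the full parameter space. Uniformity over all measurable $r$ then comes for free.

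\textbf{The ``non-transversality'' is not about dyadic slopes.} Every line $L_{x,\rho}$ has direction $(1/\rho_1,\dots,1/\rho_d)$ with $\rho_i\in[1,2]$, so it is always uniformly transversal to the coordinate hyperplanes; there is no issue of ``slopes close to dyadic directions''. The genuine obstruction is proximity of $L_{x,\rho}$ to the diagonal $\Delta=\{y:y_i=y_j\text{ for some }i\neq j\}$, which occurs exactly when $\min_{i\neq j}|x_i-x_j|$ is small. Near $\Delta$ the factors $\sigma_k(y_i)$ and $\sigma_k(y_j)$ are correlated (they involve the same dyadic children), so the Hoeffding--Janson machinery breaks down. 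The paper handles this by stratifying $\Omega^d$ into shells $\Gamma_m=\{x:\min|x_i-x_j|\sim 2^{-m}\}$ and, for each $m$, running a stochastic induction in $n\ge m$: on $\Gamma_m$ one has a trivial deterministic bound on $Y_m$ (Lemma~3.3), then for $n>m$ the independence is recovered away from a $2^{-n}$-neighborhood of $\Delta$ (controlled deterministically by Lemma~3.4), and Hoeffding--Janson gives sub-exponential tails for the increment. The contribution of $\Gamma_m$ to the outer integral is weighted by $|\Omega^d\cap\Gamma_m|\lesssim|\Omega|^{d-1}2^{-m}$, and summing over $m$ yields the claimed exponents. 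This $\Gamma_m$-decomposition is the structural core of the argument and is absent from your plan except as a vague ``decomposition by degree of transversality'' aimed at the wrong geometric quantity.
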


We will prove Theorem \ref{thm:main} in Section \ref{sec:proof-of-main-thm}. Theorem \ref{thm:maximal} is a simple consequence of Theorems \ref{thm:LabaPramanikAbstract} and \ref{thm:main}. Indeed, given $s>1/2$, then $d=2\lceil \tfrac{1}{2-2s}-1\rceil$ %$d=\lceil \tfrac{1}{1-s}-1\rceil$
satisfies $1-\tfrac{1}{d}<s\le 1-\tfrac{1}{d+2}$. Theorem \ref{thm:main} ensures that if $0<\theta<(ds+1-d)/2$ and $\xi>d+1-(d+2)s$, then the hypotheses of Theorem \ref{thm:LabaPramanikAbstract} are satisfied. Theorem \ref{thm:maximal} follows by letting $\theta\uparrow  \theta_0=(ds+1-d)/2$ and $\xi\downarrow \xi_0= d+1-(d+2)s$.

\begin{rem}
Theorem \ref{thm:main} holds for all integers $d\ge 2$, but when we apply it to prove Theorem \ref{thm:maximal}, we restrict to even values of $d$ to be able to use Theorem \ref{thm:LabaPramanikAbstract}. The assumption that $d$ is even is essential in the proof of Theorem \ref{thm:LabaPramanikAbstract}, since \eqref{eq:dual-integer-d} fails for $d$ odd.
\end{rem}

\section{Proof of Theorem \ref{thm:main}}

\label{sec:proof-of-main-thm}

\subsection{Outline of proof}

In this section we prove Theorem \ref{thm:main} which, as explained above, implies Theorem \ref{thm:maximal}. We start by giving an outline of the proof. It is easy to recast the inner integral in the definition of $\Phi_k^{(d)}$ given in \eqref{eq:main} as an integral of the $d$-fold cartesian product  $\lambda_n:=\sigma_n\times \cdots \times \sigma_n$ over a line $L_{x,r}$ determined by the $x_j$ and the choice of the function $r$. Because the functions $\sigma_k$ are highly singular, the inner integral blows up on the diagonals $x_i=x_j$. On the other hand, if the $x_i$ are well separated (essentially what is called a ``transverse intersection'' in \cite{LabaPramanik2011}) one would expect a large amount of cancellation. Indeed, a stochastic induction in $n$ coupled with Hoeffding-type estimates can be used to show that, for the fixed line $L_{x,r}$ the random variable
\[
X_n(x,r) = \int_{L_{x,r}} \lambda_n  \, d\mathcal{H}^1.
\]
decays exponentially in $n$, with overwhelming probability. Here the line $L_{x,r}$ is fixed, but because the probability that $X_n(x,r)$ does \emph{not} decay exponentially is so small (sub-exponential), it follows tht $X_n(x_j,r_j)$ decay exponentially, with a uniform bound, for any collection $L_{x_j,r_j}$ of transversal lines of size exponential in $n$. By choosing the $(x_j,r_j)$ densely in the parameter space, a deterministic continuity bound in $x$ and $r$ can then be used to extend the estimate to \emph{all} transversal lines. So far, this scheme is similar to that of \cite{ShmerkinSuomala2020}. However, we need to deal also with non-transversal lines, that is lines for which $|x_i-x_j|$ is small for some $i\neq j$ (roughly the ``internal tangencies'' of \cite{LabaPramanik2011}). For such lines $X_n(x,r)$ will be much larger, but on the other hand their weight in the integral in \eqref{eq:main} is small. If $\min_{i\neq j}|x_i-x_j|\sim 2^{-m}$, then we have essentially no control on $X_n$ for $n<m$ due to the lack of independence between the coordinates $i$ and $j$ of $\lambda_n$, but a deterministic bound can easily be given. However, for $n>m$ we regain independence and are thus able to run the stochastic induction and achieve a bound on $X_n(x,r)$ with overwhelming probability that, while it increases with $m$, is still small enough that the desired bounds on $\Phi_k^{(d)}$ can be achieved.

\subsection{Notation and setup}

We introduce some notation to be used throughout the proof. We will denote by $C,c$ positive and finite constants whose precise value is of no importance and may change even inside a given chain of inequalities. When necessary to specify a constant inside a proof, we will use subscripts such as $C_1,c_2$. We also use the notation $A\lesssim B$ instead of $A\le C B$.

We denote $\Delta=\{x\in\R^d\,:\,x_i=x_j\text{ for some }i\neq j\}$ and by $E(\delta)$, the $\delta$-neighbourhood of a set $E$ so that  $E(\delta)=\{x\in\R^d\,:\,d(x,E)\le \delta\}$. Recall that $\mathcal{D}_n$ denotes level $n$ dyadic intervals of $\R$, and let
$\mathcal{Q}_n$ be their $n$-dimensional counterparts:
\[
\mathcal{Q}_n=\{D_1\times\ldots\times D_d\,:\,D_i\in\mathcal{D}_n\}\,.
\]
By $|\cdot|$, we denote the Lebesgue measure on $\R$ and $\R^d$. One-dimensional Hausdorff measures is denoted by $\mathcal{H}^1$. We denote $[n]=\{1,2,\ldots,n\}$.

We work with the random construction specified in Section \ref{sec:nu_def}. Recall that $\sigma_n=\nu_{n+1}-\nu_n$. Given $d\in\N_{\ge 2}$ (which is from now on fixed), let  $\lambda_n$ be the $d$-fold Cartesian power $\lambda_n=\sigma_n\times\ldots\times\sigma_n$. Furthermore, let $\mu_n=\nu_n\times\cdots\times\nu_n$.

Given $x,z\in\R$ and $r\in[1,2]$, write
\[
\phi(x,r,z)=\frac{z-x}{r}\,.
\]
For each $x\in[-4,0]^d$, $r\in[1,2]^d$, let $L_{x,r}$ denote the line
\[
L_{x,r}=\left\{(\phi(x_1,r_1,z),\ldots,\phi(x_d,r_d,z))\,:\,z\in\R\right\}\subset\R^d,
\]
and note that (since $r_i\ge 1$),
\begin{equation}\label{eq:Fub}
\int\prod_{i=1}^d\sigma_n(\phi(x_i,r_i,z))\,dz\le\int_{L_{x,r}}\lambda_n\,d\mathcal{H}^1\,.
\end{equation}
We define the random variables
\begin{align} \label{eq:def-X-n}
X_n(x,r)&=X_n(x_1,\ldots,x_d,r_1,\ldots,r_d)=\int_{L_{x,r}}\lambda_n\,d\mathcal{H}^1\,,
\end{align}
for $x=(x_1,\ldots,x_d)\in[-4,0]^d$ and $r=(r_1,\ldots,r_d)\in[1,2]^d$.
%Note that in the statement of the theorem, $r$ is a function of $x$ of the form $r=(\widetilde{r}(x_1),\ldots,\widetilde{r}(x_d))$ for some measurable $\widetilde{r}\colon[-4,0]\to[1,2]$ and recall that we are seeking for an upper bound for $\Phi_k^{(d)}$ (defined in \eqref{eq:main}) that holds irrespective of the choice of $\widetilde{r}$.

\subsection{The key lemmas}

We denote
 \begin{align*}
\Gamma_m&=\{x\in[-4,0]^d\,:\,4\cdot 2^{-m}<\min|x_i-x_j|\le 4\cdot 2^{1-m}\}\,,\\
\widetilde{\Gamma}_m&=\{x\in[-4,0]^d\,:\,\min|x_i-x_j|\le 4\cdot 2^{-m}\}\,.
\end{align*}

The core of the proof lies in the following (closely related) probabilistic lemmas:
\begin{lemma}\label{lem:C_k_small_gen}
Fix $m\in\N$, $1-\tfrac1d<s<1$, $0<\theta<(ds+1-d)/2$ and $0<\delta<1-d+ds-2\theta$. Then there are deterministic constants $C_1, c_2>0$ (depending on $\theta$, $\delta$ and $s$) such that the following holds.

Given $n\ge m$, consider the event $E_{m,n}$ defined as
\begin{equation} \label{eq:def-E-m-n}
\sup_{x\in \Gamma_m, r\in [1,2]^d}X_n(x,r)>C_1 \left( 2^{ms+n(d-1-ds)}+  2^{m(d-1)(1-s)/2}2^{-\theta n}\right).
\end{equation}

Then
\[
\PP\left(\bigcup_{n=m}^\infty E_{m,n} \right)\le C_1 \exp(-c_2 2^{\delta m}).
\]
\end{lemma}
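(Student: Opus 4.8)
The plan is to prove Lemma~\ref{lem:C_k_small_gen} by a stochastic (martingale-type) induction on the scale $n$, run separately on each line $L_{x,r}$ first and then glued together over a sufficiently dense net of parameters $(x,r)$. Fix $x\in\Gamma_m$, so that $\min_{i\neq j}|x_i-x_j|\sim 2^{-m}$. For scales $n\le m$ we cannot expect cancellation, because some pair of coordinates of $\lambda_n=\sigma_n\times\cdots\times\sigma_n$ lives inside a common dyadic square of $\Q_m$ and hence the corresponding factors are not independent; here I would only use the deterministic Ahlfors-regularity bound \eqref{eq:ar} together with the trivial estimate $\|\sigma_n\|_\infty\lesssim 2^{n(1-s)}$ from \eqref{eq:trivial_up} to control $X_m(x,r)\lesssim 2^{ms}\cdot 2^{m(1-s)(d-1)}$ — i.e.\ the line $L_{x,r}$ meets $\lesssim 1$ squares of $\Q_m$ in the ``bad'' pair of coordinates and $\lesssim 2^{ms}$ squares of $\Q_m$ overall. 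This accounts for the first term $2^{ms+n(d-1-ds)}$ in \eqref{eq:def-E-m-n} at the base scale $n=m$ (then propagated).

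For $n> m$ the key point is that all $d$ coordinates are now supported on $d$ \emph{distinct} squares of $\Q_m$, so the factors $\sigma_n$ in those squares become independent. I would set up a filtration $(\mathcal F_n)_{n\ge m}$ generated by the construction up to scale $n$, and examine the conditional increments $X_{n+1}(x,r)-\EE[X_{n+1}(x,r)\mid \mathcal F_n]$ along the line. Because $\sigma_n$ has zero mean on each level-$n$ dyadic interval (it is $\nu_{n+1}-\nu_n$) and the choices in different level-$n$ intervals are independent, the conditional expectation essentially reproduces $X_n$ up to a harmless multiplicative constant, and the martingale increment at step $n$ is a sum over the $\lesssim 2^{n s}$ level-$n$ cells met by $L_{x,r}$ of independent, bounded (by $\lesssim 2^{n(1-s)d}\cdot 2^{-n}= 2^{n(d-1-ds)}$ after integrating $d\mathcal H^1$ across a cell) contributions. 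A Hoeffding/Azuma inequality for the sum of these bounded independent increments, conditionally on $\mathcal F_n$, gives that $X_{n+1}$ exceeds $(1+\e)$ times its conditional mean plus an additive error $\sim 2^{n(d-1-ds)}$ (resp.\ $\sim 2^{-\theta n}\cdot 2^{m(d-1)(1-s)/2}$, the second scale being dictated by the variance of the increment, which is of order $2^{m(d-1)(1-s)}$-ish because the non-diagonal coordinates contribute their full $L^2$ mass from scale $m$ on) with probability at most $\exp(-c 2^{\e n})$ for a suitable geometric gain. Iterating this from $n=m$ to $\infty$ and summing the (super-exponentially small, since $n\ge m$) failure probabilities yields, for a \emph{single fixed} line, the bound in \eqref{eq:def-E-m-n} off an event of probability $\lesssim \exp(-c 2^{\delta' m})$. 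The precise exponents — in particular why the threshold is exactly $\theta< (ds+1-d)/2$ and $\delta< 1-d+ds-2\theta$ — come from balancing the deterministic growth rate $2^{n(d-1-ds)}$ of a single increment against the exponential decay one can squeeze out of Hoeffding applied to $\sim 2^{ns}$ summands of that size, which caps the usable decay rate at $\tfrac12(ds+1-d)$.

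To pass from a single line to the supremum over all $x\in\Gamma_m$ and all $r\in[1,2]^d$ in \eqref{eq:def-E-m-n}, I would choose a $2^{-Cn}$-net $\mathcal N_n$ of the parameter space $\Gamma_m\times[1,2]^d$ at each scale $n$; its cardinality is only polynomial in $2^n$, so the union bound over $\mathcal N_n$ costs a factor $2^{Cn}$ and is swallowed by the $\exp(-c2^{\e n})$ per-line probability. Then a deterministic Lipschitz-type estimate — moving $(x,r)$ by $2^{-Cn}$ moves the line $L_{x,r}$ by $2^{-Cn}$ in Hausdorff distance, and $\lambda_n$, being a bounded density (by \eqref{eq:trivial_up}, $\|\lambda_n\|_\infty\lesssim 2^{nd(1-s)}$) on a union of $2^{-n}$-cubes, changes its integral over a translated/rotated line by $\lesssim 2^{nd(1-s)}\cdot 2^{-(C-1)n}$, negligible for $C$ large — extends the bound from the net to all parameters. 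Summing over $n\ge m$ gives the stated $\PP(\bigcup_{n\ge m}E_{m,n})\le C_1\exp(-c_2 2^{\delta m})$. The main obstacle, I expect, is the bookkeeping in the inductive step for $n>m$: one must carefully track how the ``anomalous'' pair of coordinates that are within $2^{-m}$ of each other contributes a fixed factor $\sim 2^{m(d-1)(1-s)}$-worth of extra mass/variance (hence the curious exponent $2^{m(d-1)(1-s)/2}$ in the second error term) while still recovering genuine independence and exponential decay in the remaining $d-2$ (or suitably many) ``good'' directions, and to do this uniformly in which pair is anomalous and uniformly over $r$. Getting the Hoeffding increments, conditional means, and the geometry of how $L_{x,r}$ threads through $\Q_n$ to line up so that the two error terms in \eqref{eq:def-E-m-n} are exactly what comes out is where the real work lies.
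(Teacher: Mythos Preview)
Your proposal has the right overall architecture --- stochastic induction, Hoeffding-type concentration at each scale, then a net-plus-Lipschitz argument to go from fixed lines to the supremum --- and this does match the paper. But there is a structural misidentification at the heart of the induction that would prevent the argument from going through as written.

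The sequence $(X_n)$ is \emph{not} a martingale, and $\EE[X_{n+1}\mid\mathcal{F}_n]$ does not ``essentially reproduce $X_n$''. Recall $X_n=\int_{L_{x,r}}\lambda_n$ with $\lambda_n=\sigma_n^{\otimes d}=(\nu_{n+1}-\nu_n)^{\otimes d}$. Conditionally on $A_n$, and for $y$ off the diagonal $\Delta(2^{-n})$, independence of the coordinates gives $\EE[\lambda_n(y)\mid A_n]=\prod_j\EE[\sigma_n(y_j)\mid A_n]=0$. So each $X_n$ has (off-diagonal) conditional mean \emph{zero}, not $X_{n-1}$; there is no recursion linking $X_{n+1}$ to $X_n$. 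What the paper actually runs the induction on is the auxiliary quantity $Y_n=\int_{L_{x,r}}\mu_n$, via its increments $Z_n=Y_{n+1}-Y_n$: here $\EE[\mu_{n+1}\mid A_n]=\mu_n$ off the diagonal, so $Z_n$ is a genuine martingale difference, and the inductive event $\mathcal{G}_k$ bounds \emph{both} $X_n$ and $Z_n$ simultaneously. Telescoping $Y_k=Y_m+\sum_{n=m}^{k-1}Z_n$ then yields $Y_k\lesssim 2^{m(d-1)(1-s)}$ (as opposed to the trivial $2^{k(d-1)(1-s)}$), and it is precisely this improved bound that feeds into the cell-count estimate $\#\mathbf{I}_j\lesssim 2^{(m(d-1)-kd)(1-s)+j}$ needed for Hoeffding--Janson to produce the exponent $1-d+ds-2\theta$. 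Without the $Y_k$/$Z_n$ machinery you only have the trivial cell count, and the resulting range of admissible $\theta$ is strictly worse than $(ds+1-d)/2$. This is also the true origin of the factor $2^{m(d-1)(1-s)/2}$ in the second term of \eqref{eq:def-E-m-n}: it is $\sqrt{Y_k}$, not a ``variance from the anomalous pair of coordinates'' as you suggest.

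Two further points you should incorporate. First, even for $x\in\Gamma_m$ and $n>m$, the line $L_{x,r}$ can still pass through $\Delta(2^{-n})$ (different $r_i$ can bring the coordinates $(z-x_i)/r_i$ together for some $z$), so you cannot assume independence along the whole line; the paper splits off $L_{x,r}\cap\Delta(2^{-k})$ and bounds it deterministically by the transversality Lemma~\ref{lem:diag_transversality}, which is exactly where the first term $2^{ms+n(d-1-ds)}$ comes from. Second, the random variables $X_Q$ indexed by $Q\in\Q_k$ are not independent (two cubes sharing a coordinate interval are coupled), so plain Hoeffding/Azuma is not available; one needs the Hoeffding--Janson inequality with a bounded-degree dependency graph (Lemma~\ref{lem:HoeffdingJanson}).
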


\begin{lemma}\label{lem:no_intersections}
Fix $m\in\N$ and $s\le 1-\tfrac1d$. Then for any $\xi>d-1-ds\ge 0$ there are $C_1, c_2>0$ such that
\[
\PP\left(X_n(x,r)> C_1 2^{ms}2^{n\xi} \text{ for some } n\ge m, x\in\Gamma_m, r\in [1,2]^d\right)
\le C_1 \exp(-c_2  2^{\delta m}),
\]
where $\delta=\xi-(d-1-ds)>0$.
\end{lemma}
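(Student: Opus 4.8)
\textbf{Plan of proof of Lemma \ref{lem:no_intersections}.}
The plan is to run a stochastic induction on the scale $n\ge m$ for a single fixed line $L_{x,r}$, obtaining an exponential-in-$m$ deterministic starting bound together with subexponential tail control at each later scale, and then to discretize the parameter space $(x,r)$ and use a deterministic continuity estimate to pass from finitely many fixed lines to all lines with $x\in\Gamma_m$. Since here $s\le 1-\tfrac1d$, i.e. $d-1-ds\ge 0$, we do not expect any genuine decay: the target bound $C_1 2^{ms}2^{n\xi}$ is \emph{growing} in $n$, so the analysis is softer than in Lemma \ref{lem:C_k_small_gen}. The first step is the deterministic ``no independence'' range: for a point $x\in\Gamma_m$, the minimal separation $\min_{i\ne j}|x_i-x_j|$ is of order $2^{-m}$, so for $n$ between $m$ and the scale at which the coordinates decouple we have no cancellation and must simply bound $X_n(x,r)=\int_{L_{x,r}}\lambda_n\,d\mathcal H^1$ crudely. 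Using $|\sigma_n|\le 2\nu_n+2\nu_{n+1}\lesssim 2^{n(1-s)}$ from \eqref{eq:trivial_up}, together with the Ahlfors-regularity bound \eqref{eq:ar} to control how much of $L_{x,r}$ meets the support in the remaining $d-1$ coordinates, one gets $X_n(x,r)\lesssim 2^{n(1-s)}\cdot 2^{-n\cdot 0}\cdots$; more carefully, since the line projects bijectively to each coordinate and the product measure $\nu_n\times\cdots\times\nu_n$ restricted to a line of slope bounded above and below behaves like a measure of dimension matching $d$ coordinates collapsed onto one axis, one obtains a bound of the shape $X_n(x,r)\lesssim 2^{m s}2^{n(d-1-ds)}$ deterministically, which is exactly (up to the constant) the claimed bound with $\xi$ replaced by $d-1-ds$. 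This already handles the whole statement \emph{deterministically} when we only need the weaker growth rate $d-1-ds$; the role of the larger exponent $\xi>d-1-ds$ and of the probability bound $C_1\exp(-c_2 2^{\delta m})$ is to absorb (a) the loss incurred by the discretization of the $2^{O(n)}$-dimensional parameter set at scale $n$, and (b) the deterministic continuity error, both of which are at most a polynomial-in-$2^n$ factor and hence swallowed by replacing $d-1-ds$ by any strictly larger $\xi$, at the cost of a failure probability that, by a Borel–Cantelli-type union over $n\ge m$ and over the net, is bounded by a subexponential-in-$2^m$ quantity, i.e. $\exp(-c_2 2^{\delta m})$ with $\delta=\xi-(d-1-ds)$.

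Concretely I would proceed as follows. Fix $\xi>d-1-ds\ge 0$ and set $\delta=\xi-(d-1-ds)>0$. First, the pointwise deterministic estimate: for every fixed $x\in\Gamma_m$, every $r\in[1,2]^d$ and every $n\ge m$, show $X_n(x,r)\le C_0\,2^{ms}\,2^{n(d-1-ds)}$ using \eqref{eq:trivial_up}, \eqref{eq:ar}, and the fact that on $\Gamma_m$ at least one pair of coordinates is $\sim 2^{-m}$-separated while the intersection $L_{x,r}\cap \spt\lambda_n$ has $\mathcal H^1$-measure controlled via Ahlfors regularity of $\nu_n$ in the other coordinates (here one has to be a little careful and sum over which pair realizes the minimum, and over the dyadic squares in $\mathcal Q_m$ that the line can meet, but this is the routine part). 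Second, discretize: choose a finite set $\{(x^{(l)},r^{(l)})\}$ of size $\le C 2^{Cn}$ that is $2^{-Cn}$-dense in $\Gamma_m\times[1,2]^d$, so that by a Lipschitz/continuity estimate for $X_n$ in $(x,r)$ — the line $L_{x,r}$ varies smoothly and $\lambda_n$ is a bounded density on a set of bounded measure — we have $\sup_{x\in\Gamma_m,r}X_n(x,r)\le 2\max_l X_n(x^{(l)},r^{(l)})+1$, say. Third, since for the single-line deterministic bound already gives $X_n(x^{(l)},r^{(l)})\le C_0 2^{ms}2^{n(d-1-ds)}$ surely, we in fact do not even need a probabilistic input to get the event in the lemma to fail with probability $0$ once $C_1$ is large enough \emph{relative to the net cardinality}, \emph{provided} $2^{n\delta}$ beats the net size $2^{Cn}$. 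That is only true for $\delta$ large, so in general one does need the stochastic induction to improve the per-line bound below the crude deterministic one, but only by the harmless polynomial factor: run the Hoeffding/martingale argument (exactly as sketched in the outline and as in \cite{ShmerkinSuomala2020}) conditionally from scale $m$ onward — where independence between coordinates is restored because the relevant dyadic squares in $\mathcal Q_n$ for $n\ge m$ lie in distinct level-$m$ squares — to get, for each fixed line, $\PP(X_n>\,C\,2^{ms}2^{n(d-1-ds)}2^{-cn})\le \exp(-2^{cn})$ for a small $c>0$; then a union bound over the $\le C2^{Cn}$ net points and over $n\ge m$ gives total failure probability $\le \sum_{n\ge m}2^{Cn}\exp(-2^{cn})\le \exp(-c_2 2^{cm})$, and finally the continuity step upgrades to all lines. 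Choosing $\xi = (d-1-ds)+\delta$ with $\delta$ matching the exponent produced by this chain (and noting $2^{n\delta}$ absorbs the product of the $2^{-cn}$-gain and the $2^{Cn}$-net loss after reparametrizing $c,C,\delta$ appropriately via interpolation between the two available exponents, as in the passage to $u\downarrow d\xi/((d+2)\theta+\xi)$ style arguments) yields the statement.

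\textbf{Main obstacle.} The genuinely delicate point is the first, deterministic step on $\Gamma_m$: one must correctly quantify how much $\mathcal H^1$-mass $\lambda_n = \sigma_n\times\cdots\times\sigma_n$ puts on a line $L_{x,r}$ when some pair of base points $x_i,x_j$ is only $2^{-m}$-apart. Because $\sigma_n$ is a signed density of size $\sim 2^{n(1-s)}$ supported on a set of measure $\sim 2^{-ns}$, the naive product bound over $d$ coordinates gives $2^{nd(1-s)}\cdot(\text{length})$, and one has to see that the length of $L_{x,r}\cap\spt\lambda_n$, rather than being $\sim 2^{-ns}$ as for a transversal line, is only as small as the geometry permits given that two coordinates are forced to stay within $2^{-m}$ of each other — this is where the factor $2^{ms}$ (a ``loss'' coming from the $m$-scale) enters and where the exponent $d-1-ds$ (as opposed to something smaller) appears. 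Once this deterministic bound is pinned down with the right powers of $2^m$ and $2^n$, the probabilistic and discretization parts are routine variants of the scheme already used in \cite{ShmerkinSuomala2018, ShmerkinSuomala2020} and, in the present regime $s\le 1-1/d$, comparatively mild since no cancellation is actually required.
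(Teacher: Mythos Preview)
Your proposal has a genuine gap at its very first step. You claim a \emph{deterministic} bound
\[
X_n(x,r)\le C_0\,2^{ms}\,2^{n(d-1-ds)}\quad\text{for all }x\in\Gamma_m,\ r\in[1,2]^d,\ n\ge m,
\]
but this does not hold. The only deterministic estimate of this shape available is Lemma~\ref{lem:diag_transversality}, and that lemma controls only the integral of $\lambda_k$ over $L_{x,r}\cap\Delta(2^{-k})$, i.e.\ the portion of the line in a $2^{-k}$-neighbourhood of the diagonal. For the full line the best deterministic bound is the trivial one of Lemma~\ref{lem:Y_k_triv_bound}, namely $X_n(x,r)\lesssim 2^{n(d-1)(1-s)}$, which exceeds your claimed bound by a factor $2^{(n-m)s}$. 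The heuristic in your ``Main obstacle'' paragraph is in fact backwards: the constraint $x\in\Gamma_m$ says that \emph{all} pairs $x_i,x_j$ are at least $\sim 2^{-m}$ apart, so the line is bounded \emph{away} from the diagonal; this does not force $L_{x,r}\cap(A_n)^d$ to be short, it only guarantees that the random variables $\nu_{k+1}(y_i)$ become \emph{conditionally independent} once one restricts to $y\notin\Delta(2^{-k})$. That independence is a probabilistic resource, not a deterministic one, and there are realizations of $A_n$ for which the line meets $(A_n)^d$ in length $\sim 2^{n(s-1)}$ regardless of $m$.

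Because the deterministic step fails, the probabilistic input is not optional, and your sketch of it is also off in two respects. First, you write $\PP\bigl(X_n>C\,2^{ms}2^{n(d-1-ds)}2^{-cn}\bigr)\le\exp(-2^{cn})$, i.e.\ you posit decay in $n$; in the regime $s\le 1-1/d$ no such decay is available (indeed the Remark after the proof shows the exponent $d-1-ds$ cannot be improved), and the lemma only asks for the growing bound $2^{n\xi}$ with $\xi>d-1-ds$. Second, and more importantly, to apply Hoeffding--Janson at scale $k$ one must control the number $\#\mathbf{I}_j$ of level-$k$ cubes in $(A_k)^d$ that the line meets, and this is governed by $Y_k(x,r)=\int_{L_{x,r}}\mu_k$, for which nothing is known a priori beyond the trivial $2^{k(d-1)(1-s)}$. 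The paper therefore runs a genuine stochastic induction: the events $\mathcal{G}_k$ bound \emph{both} $X_n$ and the increments $Z_n=Y_{n+1}-Y_n$ for $m\le n\le k$; on $\mathcal{G}_{k-1}$ one telescopes $Y_k=Y_m+\sum_{n=m}^{k-1}Z_n$ together with Lemma~\ref{lem:Y_k_triv_bound} to obtain $Y_k\lesssim_\xi 2^{ms}2^{k\xi}$; only then does Hoeffding--Janson (off $\Delta(2^{-k})$) combined with Lemma~\ref{lem:diag_transversality} (on $\Delta(2^{-k})$) give the required bound on $\PP(\mathcal{G}_k^c\mid\mathcal{G}_{k-1})$. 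Your proposal does not identify this coupling of $X_n$ with $Z_n$ via $Y_k$, and without it the concentration step cannot be closed. The discretization/continuity part of your plan, on the other hand, is correct and matches the paper.
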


 We recall that although in Theorem \ref{thm:main} the parameter $r\in[1,2]^d$ is a function of $x$ of the form $r=(\widetilde{r}(x_1),\ldots,\widetilde{r}(x_d))$ for some measurable $\widetilde{r}\colon[-4,0]\to[1,2]$, in Lemmas \ref{lem:C_k_small_gen} and \ref{lem:no_intersections} there is absolutely no relation between $x$ and $r$. In fact, this is the reason why they are useful for obtaining a bound for $\Phi_k^{(d)}$ (defined in \eqref{eq:main}) that holds irrespective of the choice of $\widetilde{r}$. Before proving the lemmas, let us show how they imply Theorem \ref{thm:main}.

\begin{proof}[Proof of Theorem \ref{thm:main} (assuming Lemmas \ref{lem:C_k_small_gen} and \ref{lem:no_intersections})]
Since $s$ and $d$ are fixed throughout the proof, all implicit constants are allowed to depend on them.

Consider first the case $1-1/d<s<1$, so that $1-d+ds>0$ by assumption, and fix
\begin{equation} \label{eq:assumption-theta}
0<\theta<(1-d+ds)/2.
\end{equation}

Fix $\Omega\subset[0,1]$ and a measurable function $r:[-4,0]^d\to [1,2]^d$. Recalling \eqref{eq:Fub} and \eqref{eq:def-X-n}, we have
\begin{equation}\label{eq:split}
\begin{split}
\int_{\Omega^d}&\int\prod_{i=1}^d\sigma_n(\phi(x_i,r_i(x),z))\,dz\,dx\\
&\le \int_{\Omega^d\cap\widetilde{\Gamma}_n} X_n(x,r(x))\,dx + \sum_{m=1}^n\int_{\Omega^d\cap\Gamma_m} X_n(x,r(x))\,dx\,.
\end{split}
\end{equation}

To analyze the first integral, fix $x\in\widetilde{\Gamma}_n$ and $r\in[1,2]^d$.
From the estimate $|A_n|\le 2^{-n}\beta_n\le 2^{(s-1)n}$, we observe that  $(\nu_n)^d$ (and thus also $\lambda_n$) vanishes off a set of $\mathcal{H}^1$ measure $\lesssim 2^{(s-1)n}$ on the line $L_{x,r}$. Making use of the trivial bound $\lambda_n\lesssim 2^{(1-s)dn}$ (recall \eqref{eq:trivial_up}), we thus have the deterministic estimate
\begin{equation*}
X_n(x,r)\lesssim 2^{(1-s)(d-1)n}\text{ for all }x\in\widetilde{\Gamma}_n, r\in[1,2]^d\,.
\end{equation*}
Using Fubini's theorem,
\[|\Omega^d\cap\widetilde{\Gamma}_n|\lesssim |\Omega|^{d-1}2^{-n}\,.\]
Combining the last two estimates,
\begin{equation}\label{eq:Gamma-tilde}
\int_{\Omega^d\cap\widetilde{\Gamma}_n} X_n(x,r(x))\,dx\lesssim  |\Omega|^{d-1}2^{n(d-2+(1-d)s)} \le |\Omega|^{d-1}2^{-\theta n}\,.
\end{equation}
using that $d-2+(1-d)s<d-1-ds<-\theta$, where the second inequality follows from \eqref{eq:assumption-theta}.

To estimate $X_n(x,r)$ on $\Gamma_m$, $m\le n$, we apply Lemma \ref{lem:C_k_small_gen}. Summing over all $m\in\N$ in the lemma and using the Borel-Cantelli Lemma, we see that almost surely, there exists $K<\infty$, such that
\begin{equation*}
X_n(x,r)\le K \left(2^{ms+n(d-1-ds)}+  2^{m(d-1)(1-s)/2}2^{-\theta n}\right)
\end{equation*}
for all $n\ge m\ge 1$, $x\in\Gamma_m$ and $r\in[1,2]^d$ (we absorb the deterministic constant $C_1$ into $K$ for convenience). On the other hand, Fubini's theorem gives
\[
|\Omega^d\cap\Gamma_m|\lesssim |\Omega|^{d-1}2^{-m}\,.
\]
We thus have
\[
\int_{\Omega^d\cap\Gamma_m}X_n(x,r(x))\,dx \lesssim K |\Omega|^{d-1}\left(2^{m(s-1)+n(d-1-ds)}+2^{m(-1+(d-1)(1-s)/2)-\theta n}\right).
\]
Using $s>1-1/d$ and \eqref{eq:assumption-theta}, we see that
\[
\sum_{m=1}^n 2^{m(s-1)+n(d-1-ds)} \lesssim 2^{n(d-1-ds)} \le 2^{-\theta n},
\]
\[
\sum_{m=1}^n 2^{m(-1+(d-1)(1-s)/2)-\theta n} \lesssim 2^{-\theta n}.
\]
Summing over all $m=1,\ldots,n$, we get
\begin{equation}  \label{eq:Gamma-sum}
\sum_{m=1}^n \int_{\Omega^d\cap\Gamma_m}X_n(x,r(x))\,dx \lesssim K |\Omega|^{d-1} 2^{-\theta n}.
\end{equation}
Combining \eqref{eq:split}, \eqref{eq:Gamma-tilde} and \eqref{eq:Gamma-sum} yields the claim in the case $s>1-1/d$.

Consider now the case $s\le 1-1/d$; the proof is very similar (in fact, simpler), except that we rely on Lemma \ref{lem:no_intersections} instead. By Lemma \ref{lem:no_intersections} and Borel-Cantelli, there is a finite random variable $K$ such that
\[
X_n(x,r) \le K \, 2^{ms} 2^{n\xi}
\]
for all $x\in\Gamma_m$, $r\in [1,2]^d$ and $n \ge m$. Combining this with Lemma \ref{lem:diag_transversality} and Fubini's theorem as in the previous part, we conclude that
\begin{align*}
\int_{\Omega^d}&\int\prod_{i=1}^d\sigma_n(\phi(x_i,r(x_i),z))\,dz\,dx\\
&\le\int_{\Omega^d\cap\widetilde{\Gamma}_n}\ldots+\sum_{m=1}^n\int_{\Omega^d\cap\Gamma_m}\ldots\\
&\lesssim |\Omega|^{d-1}2^{n(d-2+(1-d)s)}+K|\Omega|^{d-1} \sum_{m=1}^n 2^{m(s-1)}2^{n\xi}\\
&\lesssim K|\Omega|^{d-1}2^{n\xi}\,.
\end{align*}

\end{proof}

\subsection{Proof of Lemmas \ref{lem:C_k_small_gen} and \ref{lem:no_intersections}}

It remains to prove lemmas \ref{lem:C_k_small_gen} and \ref{lem:no_intersections}. We continue to think of $s,d$ as constants and hence all implicit constants are allowed to depend on them (but not on $m,k,r,x$!) In addition to $X_n(x,r)$, we will consider the random variables
\begin{align} \label{eq:def-Y-n}
Y_n(x,r)=\int_{L_{x,r}}\mu_n\,d\mathcal{H}^1\,,
\end{align}
and
\begin{align}
Z_n(x,r)=\int_{L_{x,r}}(\mu_{n+1}-\mu_n)\,d\mathcal{H}^1=Y_{n+1}(x,r)-Y_n(x,r)\,.\label{eq:def-Z-n}
\end{align}
 We begin by stating some deterministic elementary bounds, i.e. they hold for all possible choices of the sets $A_n$.

\begin{lemma}\label{lem:Y_k_triv_bound}
For all $x\in [-4,0]^d$, $r\in[1,2]^d$,
\[
Y_m(x,r)\lesssim 2^{m(d-1)(1-s)}\,.
\]
\end{lemma}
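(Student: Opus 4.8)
This is a purely deterministic estimate, valid for every realization of the sets $A_n$, and it follows by throwing away all cancellation and using the crude pointwise and support bounds on $\nu_m$. The plan is to parametrize the line $L_{x,r}$ by the variable $z$, bound the arc-length element, apply the trivial sup-bound \eqref{eq:trivial_up} to $d-1$ of the $d$ factors, and integrate the one remaining factor, which contributes only a bounded constant.

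\textbf{Step 1: reduce to an integral in $z$.} The line $L_{x,r}$ is the image of the affine map $z\mapsto(\phi(x_1,r_1,z),\ldots,\phi(x_d,r_d,z))$, whose velocity vector is $(1/r_1,\ldots,1/r_d)$. Since $r_i\ge 1$, the speed $\bigl(\sum_{i=1}^d r_i^{-2}\bigr)^{1/2}$ lies in $[1,\sqrt d\,]$, so $d\mathcal{H}^1\le\sqrt d\,dz$ along $L_{x,r}$ (this is essentially the observation already used for \eqref{eq:Fub}, now in the upward direction). Hence
\[
Y_m(x,r)=\int_{L_{x,r}}\mu_m\,d\mathcal{H}^1\;\lesssim\;\int\prod_{i=1}^d\nu_m\bigl(\phi(x_i,r_i,z)\bigr)\,dz .
\]

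\textbf{Step 2: peel off $d-1$ factors and integrate the last one.} By \eqref{eq:trivial_up} we have $\nu_m\le 2\cdot 2^{m(1-s)}$ pointwise, so bounding the first $d-1$ factors this way leaves
\[
\int\prod_{i=1}^d\nu_m\bigl(\phi(x_i,r_i,z)\bigr)\,dz\;\le\;\bigl(2\cdot 2^{m(1-s)}\bigr)^{d-1}\int\nu_m\bigl(\phi(x_d,r_d,z)\bigr)\,dz .
\]
The substitution $w=(z-x_d)/r_d$ gives $\int\nu_m(\phi(x_d,r_d,z))\,dz=r_d\!\int\nu_m(w)\,dw=r_d\le 2$, since $\nu_m$ is a probability density. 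Combining with Step 1 yields $Y_m(x,r)\lesssim 2^{m(d-1)(1-s)}$, uniformly in $x\in[-4,0]^d$ and $r\in[1,2]^d$, as claimed.

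\textbf{Obstacle.} There is essentially none: the estimate is wasteful (it ignores the fact that $\nu_m$ is supported on the small set $A_m$, and all oscillation of $\sigma$-type quantities), but it is exactly the elementary input needed later, and the only minor point to get right is the harmless arc-length factor coming from the parametrization, which is bounded because $r_i\ge 1$.
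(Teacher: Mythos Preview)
Your proof is correct and is essentially the same as the paper's. The paper phrases it as $\mathcal{H}^1(L_{x,r}\cap (A_m)^d)\lesssim\mathcal{H}^1(A_m)\le 2^{m(s-1)}$ together with the pointwise bound $\mu_m\lesssim 2^{md(1-s)}$, while you bound $d-1$ factors pointwise and integrate the remaining one; since $\nu_m=\tfrac{2^m}{\beta_m}\mathbf{1}[A_m]$ these two computations are the same arithmetic rearranged.
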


\begin{proof}
The claim follows directly from the bounds
\[
\mathcal{H}^1(L_{x,r}\cap (A_m)^d) \lesssim \mathcal{H}^1(A_m) \le  2^{-m}\beta_m\le  2^{m(s-1)},
\]
and $\mu_m\lesssim 2^{md(1-s)}$.
\end{proof}

Recall that $\Delta$ stands for the union of the diagonals $\cup_{i\neq j}\{ (x_1,\ldots,x_d): x_i =x_j\}$, and $\Delta(r)$ is the $r$-neighborhood of $\Delta$.
\begin{lemma}\label{lem:diag_transversality}
For all $x\in\Gamma_m$, $r\in[1,2]^d$ and $k\ge m$,
\begin{align*}
\int_{L_{x,r}\cap\Delta(2^{-k})}\mu_k\, d\mathcal{H}^1\lesssim 2^{ms+k(d-1-ds)}\,,\\\int_{L_{x,r}\cap\Delta(2^{-k})}\lambda_k\, d\mathcal{H}^1\lesssim 2^{ms+k(d-1-ds)}\,.
\end{align*}
\end{lemma}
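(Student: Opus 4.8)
The plan is to reduce the integral over $L_{x,r}\cap\Delta(2^{-k})$ to a sum over the dyadic cubes $Q\in\mathcal{Q}_k$ which meet both $L_{x,r}$ and the $2^{-k}$-neighbourhood of the diagonal $\Delta$, using the trivial pointwise bounds $\mu_k\lesssim 2^{kd(1-s)}$ and $\lambda_k\lesssim 2^{kd(1-s)}$ (from \eqref{eq:trivial_up}), together with the fact that $L_{x,r}$ meets each $Q\in\mathcal{Q}_k$ in a segment of length $\lesssim 2^{-k}$ (the line has slopes bounded away from $0$ and $\infty$ since each $r_i\in[1,2]$). Thus each relevant cube contributes $\lesssim 2^{-k}\cdot 2^{kd(1-s)}=2^{k(d-1-ds)}\cdot 2^{k(1-s)}$; wait — more precisely, a cube on which $\mu_k$ is nonzero is contained in $(A_k)^d$, so $\mu_k$ restricted to it is at most $2^{kd(1-s)}$, and the $\mathcal H^1$-length of $L_{x,r}$ inside it is $\lesssim 2^{-k}$, giving a per-cube contribution $\lesssim 2^{-k+kd(1-s)}=2^{k(d-1-ds)}$. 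So the whole bound comes down to \emph{counting} how many cubes $Q\in\mathcal{Q}_k$ can simultaneously intersect $L_{x,r}$ and $\Delta(2^{-k})$ and carry nonzero $\mu_k$ (resp.\ $\lambda_k$).

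The key step is this counting, and it is where the hypothesis $x\in\Gamma_m$ enters. The line $L_{x,r}$ is the graph $z\mapsto(\phi(x_1,r_1,z),\ldots,\phi(x_d,r_d,z))$; a point of $L_{x,r}$ lies within $O(2^{-k})$ of the diagonal hyperplane $\{y_i=y_j\}$ exactly when $|\phi(x_i,r_i,z)-\phi(x_j,r_j,z)|\lesssim 2^{-k}$, i.e.\ when $z$ lies in an interval (or bounded union of intervals) determined by the transversality of the two coordinate maps. Because $x\in\Gamma_m$ we have $\min_{i\ne j}|x_i-x_j|>4\cdot 2^{-m}$, and a short computation shows that $\frac{d}{dz}\left(\phi(x_i,r_i,z)-\phi(x_j,r_j,z)\right)=\frac1{r_i}-\frac1{r_j}$, while $\phi(x_i,r_i,z)-\phi(x_j,r_j,z)$ evaluated appropriately is comparable to $|x_i-x_j|/(r_ir_j)\gtrsim 2^{-m}$ for $z$ of order $1$; so along $L_{x,r}$ the distance to each diagonal piece $\{y_i=y_j\}$ stays $\gtrsim 2^{-m}$ on most of the segment, and the portion of $L_{x,r}$ within $2^{-k}$ of that piece has $\mathcal H^1$-length $\lesssim 2^{m-k}$ (it can only happen near the at-most-$O(1)$ values of $z$ where the difference crosses a neighbourhood of a dyadic point). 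Summing over the $O(d^2)=O(1)$ diagonal pairs, $\mathcal{H}^1(L_{x,r}\cap\Delta(2^{-k}))\lesssim 2^{m-k}$, hence the number of cubes $Q\in\mathcal{Q}_k$ meeting $L_{x,r}\cap\Delta(2^{-k})$ is $\lesssim 2^{m-k}/2^{-k}=2^{m}$; combined with the per-cube bound $2^{k(d-1-ds)}$ this is too weak by a factor $2^{m(1-s)}$. To recover the claimed $2^{ms+k(d-1-ds)}$ I would instead bound, among these $\lesssim 2^m$ "columns," how many actually carry nonzero $\mu_k$: since $\mu_k$ is supported on $(A_k)^d$ and $\nu_k\le 3|I|^s$-type Ahlfors regularity \eqref{eq:ar} forces $A_k$ to occupy only a $2^{k(s-1)}$-fraction in each coordinate, the diagonal restriction together with Ahlfors regularity of $\nu_k$ in the transverse directions cuts the count of $\mu_k$-charged relevant cubes down to $\lesssim 2^{m}\cdot 2^{m(?)}$ — and the bookkeeping here is exactly the content of the already-proven type of estimate in Lemma \ref{lem:Y_k_triv_bound}, now localized to a $2^{-m}$-window; in fact the cleanest route is to note $L_{x,r}\cap\Delta(2^{-k})$ is contained in $L_{x,r}\cap\Delta(2^{-m})$ which near each diagonal piece lives inside $O(1)$ cubes of $\mathcal{Q}_m$, on each of which the induced one-dimensional slice of $(A_k)^d$ has measure $\lesssim (2^{-m}\text{-localized version of }\mathcal H^1(A_k)) \lesssim 2^{-m}\cdot 2^{k(s-1)}\cdot 2^{m\cdot 0}$, pushing through to the stated exponent.

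Both displayed inequalities then follow identically, since $\lambda_k=\sigma_k^{\times d}$ satisfies $|\lambda_k|\lesssim 2^{kd(1-s)}$ and is also supported (coordinatewise) inside $A_k\cup A_{k+1}\subset A_k$, so the same cube count and the same per-cube bound apply verbatim; I would simply remark that the $\lambda_k$ case reduces to the $\mu_k$ case (up to the harmless passage from $A_k$ to $A_k\cup A_{k+1}$, both of which have $\mathcal H^1$-measure $\lesssim 2^{k(s-1)}$). The main obstacle, and the place deserving the most care, is the transversality computation showing that $x\in\Gamma_m$ confines $L_{x,r}\cap\Delta(2^{-k})$ to $\mathcal H^1$-measure $\lesssim 2^{m-k}$ \emph{and simultaneously} to $\mu_k$-mass $\lesssim 2^{ms+k(d-1-ds)}$ — i.e.\ controlling both the geometry of the line relative to the diagonal and the size of the random set $A_k$ seen along that line, uniformly in the measurable choice of $r\in[1,2]^d$; everything else is summation of $O(1)$-many geometric series.
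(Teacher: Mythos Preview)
Your overall strategy matches the paper's exactly: (i) show by a transversality argument that $L_{x,r}\cap\Delta(2^{-k})$ is contained in $O(1)$ intervals on $L_{x,r}$ of length $\lesssim 2^{m-k}$; (ii) use the Ahlfors regularity \eqref{eq:ar} to bound the $\mathcal{H}^1$-measure of $(A_k)^d$ along the line inside each such interval; (iii) multiply by the pointwise bound $\mu_k,|\lambda_k|\lesssim 2^{kd(1-s)}$. However, the execution of both (i) and (ii) in your write-up has genuine gaps.

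For (i): your sketch asserts the difference $\phi(x_i,r_i,z)-\phi(x_j,r_j,z)$ is ``$\gtrsim 2^{-m}$ for $z$ of order $1$'' and invokes crossings near ``dyadic points''. Neither is correct as stated: the difference is an affine function of $z$ with slope $(r_j-r_i)/(r_ir_j)$, and it may well vanish somewhere in $[-3,4]$. The paper resolves this with a case split. If $|r_i-r_j|<\tfrac{1}{16}2^{-m}$, then for all relevant $z$ one has $|\phi(x_i,r_i,z)-\phi(x_j,r_j,z)|\ge \tfrac{|x_i-x_j|}{2}-8|r_i-r_j|>2^{-m}$, so the set is empty. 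If $|r_i-r_j|\ge\tfrac{1}{16}2^{-m}$, then the slope of the affine function is $\gtrsim 2^{-m}$, so $\{z:|\phi(x_i,r_i,z)-\phi(x_j,r_j,z)|<2^{-k}\}$ is a single interval of length $\lesssim 2^{m-k}$. Without this split, your transversality claim is unjustified.

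For (ii): you correctly notice that the raw cube count ($\lesssim 2^m$ cubes of $\mathcal{Q}_k$) is too weak by a factor $2^{m(1-s)}$, and that Ahlfors regularity must be used. But the route you then describe --- passing to $\Delta(2^{-m})$, claiming this sits in ``$O(1)$ cubes of $\mathcal{Q}_m$'', and the displayed bound $\lesssim 2^{-m}\cdot 2^{k(s-1)}\cdot 2^{m\cdot 0}$ --- is incorrect (that set is \emph{not} contained in $O(1)$ scale-$m$ cubes, and the displayed expression does not yield the target exponent). The clean argument, as in the paper, avoids cube-counting entirely: project each of the $O(1)$ intervals $J\subset L_{x,r}$ of length $\lesssim 2^{m-k}$ to one coordinate, obtaining an interval $J'\subset[1,2]$ with $|J'|\lesssim 2^{m-k}$. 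By \eqref{eq:ar}, $\nu_k(J')\lesssim |J'|^s\lesssim 2^{(m-k)s}$, hence $|A_k\cap J'|=\beta_k 2^{-k}\nu_k(J')\lesssim 2^{(s-1)k}\cdot 2^{(m-k)s}=2^{sm-k}$. Since the line has slopes in $[1/2,1]$ in every coordinate, $\mathcal{H}^1\bigl((A_k)^d\cap L_{x,r}\cap J\bigr)\lesssim |A_k\cap J'|\lesssim 2^{sm-k}$, and multiplying by $2^{kd(1-s)}$ gives the claimed $2^{ms+k(d-1-ds)}$. Equivalently in your cube language: the number of $\mu_k$-charged $\mathcal{Q}_k$-cubes meeting $L_{x,r}\cap J$ is $\lesssim 2^{sm}$, not $2^m$. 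Your observation that the $\lambda_k$ case is identical since $\sigma_k$ is supported on $A_k$ is correct.
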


\begin{proof}
To start, we claim that for all $x\in\Gamma_m$, $r\in[1,2]^d$ and each $k\ge m$, we can cover $L_{x,r}\cap\Delta(2^{-k})$ by the union of $\lesssim 1$ intervals each of length $\lesssim 2^{m-k}$.  To verify this, let $i,j\in[d]$, $i\neq j$. Since there are $d(d-1)/2$ such pairs, it sufffices to show that
\begin{equation*}%\label{eq:diag_transversality}
\left|\Delta_{i,j,k}\right|\lesssim  2^{m-k}\,,
\end{equation*}
where
$\Delta_{i,j,k}=\{z\in [-3,4]\,:\,\left|(z-x_i)/r_i-(z-x_j)/r_j\right|<2^{-k}\}$.
Recall that $[-3,4]$ is the relevant range for $z$ since  $\phi(x,r,z)\notin[1,2]$ outside this interval.
 Without loss of generality, consider the case $i=1$, $j=2$.
If $|r_1-r_2|<\tfrac1{16}2^{-m}$, then for all $z\in[-3,4]$,
\begin{align*}
&\left|\frac{z-x_1}{r_1}-\frac{z-x_2}{r_2}\right|\ge\left|\frac{x_1-x_2}{r_1}\right|-\left|\frac{(r_1-r_2)x_2}{r_1r_2}\right|-|z|\left|\frac{r_2-r_1}{r_1r_2}\right|\\
&\ge\frac{|x_1-x_2|}2-8|r_2-r_1|
> 2^{1-m}-2^{-m-1}
>2^{-m}\,,
\end{align*}
and so $\Delta_{1,2,k}$ is empty for all $k\ge m$. If $|r_1-r_2|\ge\tfrac1{16}2^{-m}$, the line $L_{x,r}$ makes an angle $\gtrsim 2^{-m}$ with the plane $\{x_1=x_2\}$ implying that $|\Delta_{1,2,k}|\lesssim 2^{m-k}$.

Now, if $J\subset[1,2]$ is an interval of length $|J|\lesssim 2^{m-k}$,  we conclude from \eqref{eq:ar} that
\[
|A_k\cap J|=\beta_k 2^{-k}\nu_k(A_k\cap J)\lesssim 2^{sm-k}
\]
and thus also $|(A_k)^d\cap L_{x,r}\cap J|\lesssim 2^{sm-k}$, whenever $J$ is such an interval on $L_{x,r}$. Recalling \eqref{eq:trivial_up}, this yields the claim.
\end{proof}

We will also make use of the following Hoeffding--Janson inequality, see \cite[Theorem 2.1]{Janson2004}. We recall some terminology. Consider a family of random variables, $\{X_i\}_{i\in \mathbf{I}}$, indexed by $\mathbf{I}$. A graph with vertex set $\mathbf{I}$ is called a \emph{dependency graph} for $\{X_i\}_{i\in \mathbf{I}}$ if the random variable $X_i$ is independent from $\{ X_j:j\in J\}$ whenever there is no edge connecting $i$ to $J$ (here $i\in I$ and $J\subset \mathbf{I}$). In our context, a natural dependency graph for the random variables $\mu_{n+1}|_{Q}$, $Q\in\mathcal{Q}_n$, conditional on $A_n$, is obtained by connecting $Q=D_1\times\ldots\times D_d$ and $Q'=D'_1\times\ldots\times D'_d$ via an edge whenever $D_i=D'_j$ for some pair $i,j\in[d]$.

\begin{lemma} \label{lem:HoeffdingJanson}
Let $\{ X_i: i\in \mathbf{I}\}$ be zero mean random variables uniformly bounded by $R>0$, and with a dependency graph whose vertices have degree bounded by $D$. Then
\begin{equation*}
\mathbb{P}\left(\left|\sum_{i\in \mathbf{I}} X_i\right|> a\right) \le 2\exp\left(\frac{- 2 a^2}{(D+1)(\#\mathbf{I}) R^2}\right).
\end{equation*}
\end{lemma}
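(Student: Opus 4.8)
The plan is to derive this from the classical independent-case Hoeffding inequality via a colouring argument — which is exactly the route of \cite{Janson2004} — so in a self-contained write-up I would proceed as follows. First, properly colour the dependency graph: since every vertex has degree at most $D$, a greedy colouring (process the vertices in any order, giving each the smallest colour avoided by its already-coloured neighbours) uses at most $\chi:=D+1$ colours, producing a partition $\mathbf{I}=\bigsqcup_{c=1}^{\chi}\mathbf{I}_c$ into independent sets of the graph. By the defining property of a dependency graph, a short induction on $\#\mathbf{I}_c$ shows that for each $c$ the family $\{X_i:i\in\mathbf{I}_c\}$ is mutually independent.

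Second, control the moment generating function of $S:=\sum_{i\in\mathbf{I}}X_i$. Writing $S=\sum_{c=1}^{\chi}S_c$ with $S_c=\sum_{i\in\mathbf{I}_c}X_i$ and applying the generalized Hölder inequality to $e^{tS}=\prod_c e^{tS_c}$ gives
\[
\EE\big(e^{tS}\big)\le\prod_{c=1}^{\chi}\Big(\EE\big(e^{t\chi S_c}\big)\Big)^{1/\chi}.
\]
For fixed $c$, independence within $\mathbf{I}_c$ factorizes $\EE(e^{t\chi S_c})=\prod_{i\in\mathbf{I}_c}\EE(e^{t\chi X_i})$, and Hoeffding's lemma (using $\EE X_i=0$ and $|X_i|\le R$) bounds each factor by $\exp(t^2\chi^2R^2/2)$; hence $\EE(e^{tS})\le\exp\big(\tfrac12 t^2\chi R^2\,\#\mathbf{I}\big)$. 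A Chernoff bound $\PP(S>a)\le\inf_{t>0}e^{-ta}\EE(e^{tS})$, optimized at $t=a/(\chi R^2\#\mathbf{I})$, then yields a one-sided tail estimate $\PP(S>a)\le\exp\big(-c\,a^2/((D+1)R^2\#\mathbf{I})\big)$; applying this to $-S$ as well and adding produces the two-sided bound with the factor $2$ in front.

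The one delicate point is the constant in the exponent: a cruder variant which merely union-bounds over the $\chi$ colour classes and applies Hoeffding to each would lose an extra multiplicative factor of order $\chi$, so the Hölder step above — equivalently, working with the fractional chromatic number as in \cite{Janson2004} — is what is needed for a clean constant, and matching the precise numerical constant in the statement also requires care with the exact form of Hoeffding's lemma. Since only the exponential rate is used in this paper, any of these variants suffices, and in practice I would simply invoke \cite[Theorem 2.1]{Janson2004}.
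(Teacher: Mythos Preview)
Your proposal is correct and matches the paper's approach: the paper does not supply its own proof of this lemma at all but simply cites \cite[Theorem~2.1]{Janson2004}, which is exactly what you propose doing (your sketch of the colouring/H\"older argument is precisely Janson's proof). Your remark that only the exponential rate matters is well taken, since the numerical constant $2$ in the exponent as stated does not quite match what the standard Hoeffding lemma combined with $|X_i|\le R$ actually yields, but this is irrelevant for the applications in the paper.
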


\begin{proof}[Proof of Lemma \ref{lem:C_k_small_gen}]
Fix $m\in\N$ for the rest of the proof. Given $k\ge m$, let us define an event $\mathcal{G}_k$ as
\[
\sup_{m\le n\le k, x\in\Gamma_m, r\in[1,2]^d}\{X_n(x,r),Z_n(x,r)\}\le C_1\left( 2^{ms+n(d-1-ds)}+  2^{m(d-1)(1-s)/2}2^{-\theta n}\right)\,,
\]
where $C_1$ is chosen so that $2 C_1$ is the implicit constant from Lemma \ref{lem:diag_transversality}.
%
%Let us first define refined versions of the events $E_{m,n}$, where we require the same upper bound for both $X_n$ and $Z_n$.
%
%More precisely, let $F_{m,n}$ denote the event defined as
%\begin{equation} \label{eq:def-F-m-n}
%\sup_{x\in \Gamma_m, r\in [1,2]^d}\{X_n(x,r), Z_n(x,r)\}>C_1 \left( 2^{ms+n(d-1-ds)}+  2^{m(d-1)(1-s)/2}2^{-\theta n}\right)\,,
%\end{equation}
% where $C_1$ is chosen so that $2 C_1$ is the implicit constant from Lemma \ref{lem:diag_transversality}.
%
%For all $k\ge m$, let $\mathcal{G}_k$ denote the event that none of the events $E_{m,m}, \ldots, E_{m,k}$ hold, where $E_{m,n}$ are the events defined in \eqref{eq:def-E-m-n}, and $C_1$ is chosen so that $2 C_1$ is the implicit constant from Lemma \ref{lem:diag_transversality}. Explicitly, $\mathcal{G}_k$ is the event
%\[\sup_{x\in\Gamma_m, r\in[1,2]^d} X_n(x,r)\le C_1\left( 2^{ms+n(d-1-ds)}+  2^{m(d-1)(1-s)/2}2^{-\theta n}\right)\text{ for all }m\le n\le k.\]
%
Note that $\mathcal{G}_k$ is determined by $A_k$. Our goal is to show that
\begin{equation} \label{eq:claim-G-k}
1 - \PP(\mathcal{G}_k) \lesssim \exp(-c_2 2^{\delta m}),
\end{equation}
where $\delta$ is as in the statement of the lemma. This implies the lemma since $\cup_{n=m}^k E_{m,n}$ is contained in the complement of $\mathcal{G}_k$.

Let $\mathcal{G}_{m-1}$ be the sure event (so that it holds deterministically). To prove \eqref{eq:claim-G-k}, we will estimate $\PP(\mathcal{G}_{k}\,|\,\mathcal{G}_{k-1})$ for $k\ge m$.

Recall that  $X_n$ is defined by integrating $\lambda_n$, $Y_n$ is defined by integrating $\mu_n$ and $Z_n$ is defined by integrating $\mu_{n+1}-\mu_n$ (each over the line $L_{x,r}$).
Telescoping, we see
that $Y_k(x,r)=Y_{m}(x,r)+\sum_{n=m}^{k-1}Z_n(x,r)$. We deduce from Lemma \ref{lem:Y_k_triv_bound} that, conditional on $\mathcal{G}_{k-1}$,
\begin{equation}\label{eq:Y_k_bound}
\begin{split}
Y_k(x,r)&\lesssim  2^{m(d-1)(1-s)}+\sum_{n=m}^{k-1}\left(2^{ms+n(d-1-ds)}+ 2^{m(d-1)(1-s)/2}2^{-\theta n}\right)\\
&\lesssim 2^{m(d-1)(1-s)}\,.
\end{split}
\end{equation}

For the time being, let us consider a fixed  $(x,r)\in\Gamma_m\times[1,2]^d$. We next pursue to estimate $X_k(x,r)$. We split the line $L_{x,r}$ into two parts: $L_{x,r}\cap\Delta(2^{-k})$ and $L_{x,r}\setminus\Delta(2^{-k})$ and write
\[X_k(x,r)=\int_{L_{x,r}\cap\Delta(2^{-k})}\lambda_k\,d\mathcal{H}^1+\int_{L_{x,r}\setminus\Delta(2^{-k})}\lambda_k\,d\mathcal{H}^1\,.\]
The first integral will be bounded deterministically using Lemma \ref{lem:diag_transversality}.
To bound the remaining term, $\int_{L_{x,r}\cap\Delta(2^{-k})}\lambda_k$, we borrow an argument from \cite[Lemma 4.7]{ShmerkinSuomala2020}. Given $Q\in\mathcal{Q}_k$, denote
\[
X_Q=\int_{Q\cap L_{x,r}\setminus\Delta(2^{-k})}\lambda_k\,d\mathcal{H}^1\,.
\]
Note that $X_Q$ depends on $x,r$ and $k$. We condition on a fixed realization of $A_{k-1}$ such that $\mathcal{G}_{k-1}$ holds. Let
\[
\mathbf{I} = \left\{  Q\in\mathcal{Q}_k: Q\cap L_{x,r}\neq\varnothing \text{ and } Q\cap A_{k-1} \neq\varnothing \right\}.
\]
For $j> k$, set
\[
\mathbf{I}_j=\left\{Q\in\mathbf{I}\,:\,\sqrt{d}\cdot 2^{-j}<|Q\cap L_{x,r}\setminus\Delta(2^{-k})|\le \sqrt{d}\cdot 2^{1-j}\right\}\,.
\]
We will bound the random sum $\sum_{Q\in\mathbf{I}_j}X_Q$ for each $j>k$ through Lemma \ref{lem:HoeffdingJanson}.  We claim:
\begin{enumerate}[(\rm a)]
\item \label{it:a} $\EE(X_Q)=0$ and $|X_Q|\lesssim 2^{kd(1-s)-j}$ for each $Q\in\mathbf{I}_j$.
\item \label{it:b} There is a dependency graph for $\{X_Q\,:\,Q\in\mathbf{I}_j\}$, whose vertices have a degree bounded by a constant $C=C(d)$.
\item \label{it:c} $\#\mathbf{I}_j\lesssim 2^{(m(d-1)-kd)(1-s)+j}$.
\end{enumerate}
To verify the first item, note that since we are conditioning on a realization of $A_{k-1}$, for $y\in [1,2]^d \setminus \Delta(2^{-k})$ the events that $y_j \in A_k$ are independent. By definition of $\nu_k$ we have
\begin{equation}\label{eq:mart}\mathbb{E}(\nu_{k+1}(y_j)|A_{k})=\nu_{k}(y_j)\,,
\end{equation}
so by independence and linearity we see that $\EE(X_Q)=0$. Since
\[
|\lambda_k|=|\left(\nu_{n+1}-\nu_n\right)^d|\lesssim 2^{dk(1-s)},
\]
and $|Q\cap L_{x,r}\setminus\Delta(2^{-k})|\lesssim 2^{-j}$, the claim \eqref{it:a} holds. The second claim concerning the bound on the dependency degrees follows since for any  $Q=D_1\times\ldots\times D_d\in\mathcal{Q}_k$, there can be at most $C(d)$ cubes $Q'=D'_1\times\ldots\times D'_d\in\mathcal{Q}_k$ such that $D_i=D'_j$ for some pair $i,j\in[d]$. Finally, recalling \eqref{eq:Y_k_bound}, we have
\[
\#\mathbf{I}_j \cdot 2^{-j} \cdot 2^{k(1-s)d} \lesssim Y_k(x,r) \lesssim  2^{m(d-1)(1-s)},
\]
from which \eqref{it:c} follows.

Plugging in \eqref{it:a}--\eqref{it:c} into Lemma \ref{lem:HoeffdingJanson}, we obtain
\begin{equation}\label{eq:sumXQ}
\PP\left(\left|\sum_{Q\in\mathbf{I}_j}X_Q\right|>\tfrac{1}{2(j-k)^2}C_1 2^{m(d-1)(1-s)/2-\theta k }\right)\lesssim \exp\left(-c\tfrac{2^{j-k}}{(j-k)^4}2^{k(1-d+ds-2\theta)}\right)\,.
\end{equation}
Next we observe that \eqref{it:a}--\eqref{it:c} are valid also for
\[
Z_Q=\int_{Q\cap L_{x,r}\setminus\Delta(2^{-k})}\mu_{k+1}-\mu_k\,d\mathcal{H}^1\,,
\]
Indeed, for \eqref{it:a} it is enough to observe that \eqref{eq:mart} implies $\EE(\mu_{k+1}(y)\,|\,A_k)=\mu_k(y)$ for $y\in [1,2]^d\setminus\Delta(2^{-k})$ and that $\mu_{k+1},\mu_k\lesssim 2^{dk(1-s)}$. Moreover, the bound on the dependency degree for $Z_Q$ holds for the same reason as for the $X_Q$ and \eqref{it:c} does not involve $X_Q$ nor $Z_Q$, but it is due to \eqref{eq:Y_k_bound}. Whence, Lemma \ref{lem:HoeffdingJanson} implies that we may replace $X_Q$ by $Z_Q$ in \eqref{eq:sumXQ}.

Summing over all $j>k$ in \eqref{eq:sumXQ} yields
\begin{equation}\label{eq:fixed_x_X}
\begin{split}
\PP&\left(\int_{L_{x,r}\setminus\Delta((2^{-k}))}\lambda_k\, d\mathcal{H}^1>C_1 2^{m(d-1)(1-s)/2}2^{-\theta k}\,|\,\mathcal{G}_{k-1}\right)\\
&\le \sum_{j>k}\PP\left(\sum_{\mathbf{I}_j}X_Q>\tfrac{1}{2(j-k)^2}C_1 2^{m(d-1)(1-s)/2-\theta k }\,|\,\mathcal{G}_{k-1}\right)\\
&\lesssim \exp(-c 2^{k(1-d+ds-2\theta)})\,.
\end{split}
\end{equation}
and similarly
\begin{equation}\label{eq:fixed_x_Z}
\begin{split}
&\PP\left(\int_{L_{x,r}\setminus\Delta((2^{-k}))}\mu_{k+1}-\mu_k\, d\mathcal{H}^1 >C_1 2^{m(d-1)(1-s)/2}2^{-\theta k}\,|\,\mathcal{G}_{k-1}\right)\\
&\lesssim \exp(-c 2^{k(1-d+ds-2\theta)})\,.
\end{split}
\end{equation}
More precisely, the above holds uniformly for any realization of $A_{k-1}$ such that $\mathcal{G}_{k-1}$ holds, and in particular simply conditioning on $\mathcal{G}_{k-1}$.
Combining \eqref{eq:fixed_x_X} and \eqref{eq:fixed_x_Z} with Lemma \ref{lem:diag_transversality}, we thus have for each fixed $(x,r)\in\Gamma_m\times[1,2]^d$,
\begin{equation}\label{eq:fixed_x_total}
\begin{split}
\PP&\left(\max\{X_k(x,r),Z_k(x,r)\}>C_1 2^{ms+k(d-1+ds)}+ C_1 2^{m(d-1)(1-s)/2}2^{-\theta k}\,|\,\mathcal{G}_{k-1}\right)\\
&\lesssim \exp(-c 2^{k(1-d+ds-2\theta)})\,.
\end{split}
\end{equation}

To complete the proof, we still need to show that this estimate holds simultaneously for all $(x,r)\in\Gamma_m\times[1,2]^d$. The reason for this is that while \eqref{eq:fixed_x_total} gives an estimate that is superexponentially small in $k$, we may approximate $\sup_{x,r} X_k(x,r)$ (resp. $\sup_{x,r} Z_k(x,r)$) with a discrete family whose size grows only exponentially in $k$.
To that end, we will first derive a deterministic continuity modulus for the maps $(x,r)\mapsto X_k(x,r)$, $(x,r)\mapsto Z_k(x,r)$. Note that since $r_i\in[1,2]$, all the lines $L_{x,r}$ form an angle $\gtrsim 1$ with the coordinate hyperplanes $\{x\in\R^d\,:\,x_i=0\}$. From this, it easily follows that for all $Q\in\mathcal{Q}_{k}$, the map $(x,r)\mapsto\mathcal{H}^{1}(Q\cap L_{x,r})$ defined on $[-4,0]^d\times[1,2]^d$, is Lipschitz with a Lipschitz constant independent of $Q$ (and $k$).
%Note that we consider $x$ and $r$ as free variables. In particular, we are not claiming any continuity modulus for $x\mapsto\mathcal{H}^{1}(Q\cap L_{x,r(x)})$ for a fixed  measurable realisation of $r\colone$ for a fixed measurable}.
Furthermore, each $L_{x,r}$ intersects  $\lesssim 2^{ks}$ such cubes. Taking once more into account the bound \eqref{eq:trivial_up},
we deduce that
\begin{align*}
|X_k(x,r)-X_k(x',r')|\lesssim 2^{k(d+s-ds)}(|x-x'|+|r-r'|)\,,\\
|Z_k(x,r)-Z_k(x',r')|\lesssim 2^{k(d+s-ds)}(|x-x'|+|r-r'|)\,.
\end{align*}
Thus, if $\Lambda_k\subset\Gamma_m\times[1,2]^d$ is $2^{-Ck}$-dense, for some sufficiently large constant $C$, then
\[\sup_{x\in\Gamma_m, r\in[1,2]^d}X_k(x,r)\le\sup_{(x,r)\in\Lambda_k}X_k(x,r)+\frac12 C_1 2^{ms+k(d-1+ds)}\,, \]
and similarly for $Z_k(x,r)$.
As $\Lambda_k$ may be chosen to have at most $2^{Ck}$ elements, recalling \eqref{eq:fixed_x_total} gives
\[
1-\PP(\mathcal{G}_{k}\,|\,\mathcal{G}_{k-1})\lesssim 2^{C k}\exp(-c 2^{k(1-d+ds -2\theta)}) \le 2^{C k}\exp(-c 2^{\delta k}) \,,
\]
using that $\delta<1-d+ds-2\theta$. We conclude that, for each $n\ge m-1$,
\[
1-\PP(\mathcal{G}_n)   \le \sum_{k=m}^{n} 1 -\PP(\mathcal{G}_k|\mathcal{G}_{k-1}) \lesssim \exp(-c 2^{\delta m})\,.
\]
This shows that \eqref{eq:claim-G-k} holds and completes the proof.
\end{proof}

\begin{proof}[Proof of Lemma \ref{lem:no_intersections}]
The proof is a small modification of that of Lemma \ref{lem:C_k_small_gen}. Fix $K\ge 1$ and $m\in\N$.
For all $k\ge m$, let $\mathcal{G}_k$ now denote the event that
\[
X_n(x,r), Z_n(x,r)\le K 2^{ms}2^{n\xi}\text{ for all }m\le n\le k, x\in\Gamma_m, r\in[1,2]^d,
\]
and, as before, let $\mathcal{G}_{m-1}$ be the sure event. We shall estimate $\PP(\mathcal{G}_{k}\,|\,\mathcal{G}_{k-1})$ for $k\ge m$.

Fix $(x,r)\in\Gamma_m\times[1,2]^d$. As before, we write
\begin{align*}
X_k(x,r)&=\int_{L_{x,r}\cap\Delta(2^{-k})}\lambda_k\,d\mathcal{H}^1+\int_{L_{x,r}\setminus\Delta(2^{-k})}\lambda_k\,d\mathcal{H}^1\,,\\
Z_k(x,r)&=\int_{L_{x,r}\cap\Delta(2^{-k})}(\mu_{k+1}-\mu_k)\,d\mathcal{H}^1+\int_{L_{x,r}\setminus\Delta(2^{-k})}(\mu_{k+1}-\mu_k)\,d\mathcal{H}^1,
\end{align*}
and observe that, by Lemma \ref{lem:diag_transversality}, it is enough to bound the second term in both decompositions.

Fix $(x,r)\in\Gamma_m\times [1,2]$ and a realization of $A_{k-1}$ such that $\mathcal{G}_{k-1}$ holds. Recall the definition of the random variables $Y_n$ from \eqref{eq:def-Y-n}. Using Lemma \ref{lem:Y_k_triv_bound} and telescoping as in the proof of Lemma \ref{lem:C_k_small_gen},
\[
Y_k(x,r)=Y_{m}(x,r)+\sum_{n=m}^{k-1}Z_n(x,r) \lesssim_{\xi} 2^{ms}2^{k\xi}\,.
\]
We now define the random variables $X_Q$ and $Z_Q$ and the families $\mathbf{I}, \mathbf{I}_j$ as in the proof of Lemma \ref{lem:C_k_small_gen}. Then \eqref{it:a}--\eqref{it:b} continue to hold for the same reasons, while \eqref{it:c} now becomes
\begin{enumerate}
  \item[(c')] \label{it:c'} $\#\mathbf{I}_j \lesssim_{\xi} 2^{ms+k(\xi-(1-s)d)+j}$.
\end{enumerate}
Indeed, we have
\[
\#\mathbf{I}_j \cdot 2^{-j} \cdot 2^{k(1-s)d} \lesssim Y_k(x,r) \lesssim_{\xi}   2^{ms}2^{k\xi}.
\]
Applying Lemma \ref{lem:HoeffdingJanson} as in the proof of Lemma \ref{lem:C_k_small_gen}, we get
\begin{align*}
\PP\left(\int_{L_{x,r}\setminus\Delta(2^{-k})}\lambda_k>2^{ms/2} 2^{\xi k} \,|\,\mathcal{G}_{k-1}\right)\lesssim_{\xi} \exp(-c 2^{k(\xi+1-d+ds)})\,,\\
\PP\left(\int_{L_{x,r}\setminus\Delta(2^{-k})}(\mu_{k+1}-\mu_k)>2^{ms/2} 2^{\xi k} \,|\,\mathcal{G}_{k-1}\right)\lesssim_{\xi} \exp(-c 2^{k(\xi+1-d+ds)})\,,
\end{align*}
uniformly (over all the realizations of $\mathcal{G}_{k-1}$). While this estimate holds for a fixed $(x,r)$, we can extend it to all $(x,r)\in\Gamma_m\times[1,2]^d$ simultaneously using the same argument from Lemma \ref{lem:C_k_small_gen} (in fact because we only seek an exponential upper bound, the argument is easier in the current setting). The proof is now concluded in exactly the same way as the proof of Lemma \ref{lem:C_k_small_gen}: for $n \ge m-1$,
\[
1-\PP(\mathcal{G}_n)   \le \sum_{k=m}^{n} 1 -\PP(\mathcal{G}_k|\mathcal{G}_{k-1}) \lesssim_\xi \exp(-c 2^{\delta m})\,.
\]
\end{proof}

\begin{rem}
Since for ``most'' lines $L$ the intersections $L\cap A_n^d$ are empty, one could hope to improve the exponent $\xi$ in Lemma \ref{lem:no_intersections} to some value $<d-1-ds$. However, at least when $|\Omega|\approx 1$, this is not possible: Given an interval $D\subset A_n$ (an element of $\mathcal{D}_n$), we can select $r(t)$ for all $t\in I\subset[0,1]$ with $|I|\gtrsim 1$ in such a way that $L_{x,r(x)}$ passes through the center of $D^d$. Indeed, for a suitable $z=z_D$, define  $r(t)$ such that $(z-t)/r(t)$ equals the center point of $D$. Then,
\begin{align*}
\int_{[0,1]^d}\int\prod_{i=1}^d\sigma_n(\phi(x_i,r(x_i),z))\,dz\,dx\gtrsim  2^{n(d-1-ds)}\,.
\end{align*}
\end{rem}

%\bibliographystyle{plain}
%\bibliography{maximal}

\end{document}